\def\IZ{{\mathbb Z}}
\def\IR{{\mathbb R}}
\def\IP{{\mathbb P}}
\def\IL{{\mathbb L}}
\def\IE{{\mathbb E}}
\def\cA{{\cal A}}
\def\cB{{\cal B}}
\def\cC{{\cal C}}
\def\cD{{\cal D}}
\def\cE{{\cal E}}
\def\cH{{\cal H}}
\def\cI{{\cal I}}
\def\cK{{\cal K}}
\def\cS{{\cal S}}
\def\cU{{\cal U}}
\def\cV{{\cal V}}
\def\cW{{\cal W}}
\def\wt{\widetilde}
\def\wh{\widehat}
\def\r{\rightarrow}
\def\ov{\overline}
\def\dis{\displaystyle}
\def\f{\footnotesize}
\def\no{\normalsize}
\def\ve{\varepsilon}
\numberwithin{equation}{section}
\theoremstyle{plain}
\newtheorem{thm}{Theorem}[section]
\newtheorem{rmk}{Remark}[section]
\newtheorem{lem}{Lemma}[section]
\begin{document}

\begin{frontmatter}
\title{On macroscopic holes in some supercritical strongly dependent percolation models} 

\runtitle{On macroscopic holes}


\begin{aug}
\author{\fnms{Alain-Sol} \snm{Sznitman}
\ead[label=e1]
{sznitman@math.ethz.ch}}


\affiliation{ETH Z\"urich
}

\address{Departement Mathematik\\
ETH Z\"urich\\
R\"amistrasse 101\\
CH-8092 Z\"urich
\\
}

\end{aug}

\begin{abstract}
We consider $\IZ^d$, $d \ge 3$. We investigate the vacant set $\cV^u$ of random interlacements in the strongly percolative regime, the vacant set $\cV$ of the simple random walk, and the excursion set $E^{\ge \alpha}$ of the Gaussian free field in the strongly percolative regime. We consider the large deviation probability that the adequately thickened component of the boundary of a large box centered at the origin in the respective vacant sets or excursion set leaves in the box a macroscopic volume in its complement. We derive asymptotic upper and lower exponential bounds for theses large deviation probabilities. We also derive geometric information on the shape of the left-out volume. It is plausible, but open at the moment, that certain critical levels coincide, both in the case of random interlacements and of the Gaussian free field. If this holds true, the asymptotic upper and lower bounds that we obtain are matching in principal order for all three models, and the macroscopic holes are nearly spherical. We heavily rely on the recent work \cite{NitzSzni} by Maximilian Nitzschner and the author for the coarse graining procedure, which we employ in the derivation of the upper bounds.
\end{abstract}

\begin{keyword}[class=MSC]
\kwd[]{60F10, 60K35, 60G50, 60G15, 82B43}

\end{keyword}

\begin{keyword}
\kwd{random interlacements; Gaussian free field; percolation; large deviations}
\end{keyword}

\end{frontmatter}

\section{Introduction}

In this article we consider random interlacements, the simple random walk, and the Gaussian free field on $\IZ^d$, $d \ge 3$. We are interested in the vacant set $\cV^u$ of random interlacements in the strongly percolative regime, in the vacant set $\cV$ of the simple random walk, and in the excursion set $E^{\ge \alpha}$ of the Gaussian free field in the strongly percolative regime. We consider a large box centered at the origin of side-length $2N$, and investigate the asymptotics of the large deviation probabilities that the adequately thickened components of the boundary of the box in the respective vacant sets leave out in the box a macroscopic volume in their complement. We derive exponential upper and lower bounds on these probabilities, which involve certain critical levels for the random interlacements and for the Gaussian free field. It is plausible, but open at the moment, that these critical levels actually coincide for random interlacements, and also in the case of the Gaussian free field. If these equalities hold, the upper and lower exponential bounds that we derive here, match in principal order, and the above mentioned sets left out in the box are close to spherical holes. Several of the results presented in this article rely on the recent solidification estimates of porous interfaces and the coarse graining procedure developed in \cite{NitzSzni}. The questions that we investigate share a similar flavor to phase separation and the emergence of a macroscopic Wulff shape for Bernoulli percolation or for the Ising model, see \cite{Cerf00}, \cite{Bodi99}. But there are notable differences. In particular, the exponential costs for the long range models discussed here involve capacity and not perimeter.

We will now describe our results in more detail. We begin with the case of random interlacements. We consider $\IZ^d$, $d \ge 3$, and for $u > 0$ denote by $\cI^u$ and $\cV^u = \IZ^d \backslash \cI^u$ the interlacement at level $u$ and its corresponding vacant set. We denote by $\IP$ the probability governing the random interlacements and refer to \cite{CernTeix12}, \cite{DrewRathSapo14c}, for background material over the model. Given an integer $N \ge 1$, we write
\begin{equation}\label{0.1}
B(0,N) = [-N,N]^d \cap \IZ^d \; \;\mbox{and}\; \;S_N = \{x \in \IZ^d; |x|_\infty = N\},
\end{equation}
for the closed $|\cdot |_\infty$-ball in $\IZ^d$ with center $0$ and radius $N$, and for its inner boundary (accordingly, we denote by $B(x,r)$ the closed $|\cdot |_\infty$-ball in $\IZ^d$ with center $x \in \IZ^d$ and radius $r \ge 0$). We then consider
\begin{equation}\label{0.2}
\begin{split}
\cC^u_N = & \;\mbox{the connected component of $S_N$ in $\cV^u \cup S_N$}
\\
&\mbox{(so, by convention we have $S_N \subseteq \cC^u_N)$}
\\[1ex]
 =& \;\mbox{the collection of sites in $\IZ^d$ that either belong to $S_N$ or are}
\\[-0.5ex]
& \;\mbox{connected by a finite nearest-neighbor path with end point}
\\[-0.5ex]
&\; \mbox{in $S_N$, such that each site of the path, except maybe for the}
\\[-0.5ex]
& \;\mbox{last one, belongs to $\cV^u$}. 
\end{split}
\end{equation}
Further, we consider a sequence of non-negative integers $\wt{L}_0(N)$, such that
\begin{equation}\label{0.3}
\wt{L}_0(N) = o(N), \;\mbox{as} \; N \r \infty.
\end{equation}
We use $\wt{L}_0(N)$ to thicken $\cC^u_N$, and define
\begin{equation}\label{0.4}
\mbox{$\wt{\cC}^u_N =$ the $\wt{L}_0$-neighborhood of $\cC^u_N$ in $|\cdot |_\infty$-distance},
\end{equation}
as well as the respective complements of $\cC^u_N$ and $\wt{\cC}^u_N$ in $B(0,N)$:
\begin{align}
\cW^u_N & = B(0,N) \;\backslash\; \cC^u_N, \;\;\mbox{and} \label{0.5}
\\[1ex]
\wt{\cW}^u_N & = B(0,N) \;\backslash \;\wt{\cC}^u_N \label{0.6}
\end{align}
(incidentally, note that $\wt{\cW}_N^u$ decreases when $\wt{L}_0(N)$ is replaced by a bigger sequence). Informally, $\wt{\cW}^u_N$ corresponds to the ``hole left out in $B(0,N)$ by the thickening $\wt{\cC}^u_N$ of the component of $S_N$ in $\cV^u \cup S_N$''. As mentioned above, we are interested in the event that $\wt{\cW}^u_N$ has a macroscopic volume of order $N^d$. The strongly non-percolative regime of $\cV^u$ corresponds to $u > u_{**}$, and the strongly percolative regime corresponds to $0 < u < \ov{u}$, with $u_{**}$ and $\ov{u}$ as in (0.2) and (2.3) of \cite{Szni17}. One has $0 < \ov{u} \le u_* \le u_{**} < \infty$ with $u_*$ the critical level for the percolation of the vacant set $\cV^u$ (the positivity of $\ov{u}$ for all $d\ge3$ is due to \cite{DrewRathSapo14a}). It is plausible, but open at the moment, that $\ov{u} = u_* = u_{**}$ (and some possible progress towards proving $u_* = u_{**}$ may come from \cite{DumiRaouTass17}). In the strongly non-percolative regime corresponding to $u > u_{**}$, the probability that there is a path in $\cV^u$ between $0$ and $x$ decays exponentially in $|x|_\infty$, when $d \ge 4$, with a logarithmic correction, when $d = 3$, see Theorem 3.1 of \cite{PopoTeix15}. From this fact, it readily follows that
\begin{equation}\label{0.7}
\begin{array}{l}
\mbox{when $u > u_{**}$, for any $\wt{L}_0(N)$ as in (\ref{0.3}), $|\wt{\cW}^u_N| \,/ \,|B(0,N)| \underset{N}{\longrightarrow} 1$}
\\[-0.5ex]
\mbox{in $\IP$-probability,} 
\end{array}
\end{equation}
where for $A$ finite subset of $\IZ^d$ we let $|A|$ denote the number of elements of $A$.

We will see in Section 3 that $|\cW^u_N|$ behaves differently in the strongly percolative regime $0 < u < \ov{u}$. In particular, we show in Theorem \ref{theo2.1} that
\begin{equation}\label{0.8}
\left\{ \begin{array}{l}
\mbox{there exists $\wt{L}_0(N)$ satisfying (\ref{0.3}) such that for any}
\\
\mbox{$0 < u < \ov{u}$ and $\nu > 0$,}
\\
\mbox{$\limsup\limits_N \;\frac{1}{N^{d-2}} \log \IP[| \wt{\cW}_N^u| \ge \nu N^d] \le - \frac{1}{d} \;(\sqrt{\ov{u}} - \sqrt{u})^2 {\rm cap}(B_\nu)$},
\end{array}\right.
\end{equation}
where $B_\nu$ stands for the closed Euclidean ball centered at the origin in $\IR^d$ with volume $\nu$ and ${\rm cap}(B_\nu)$ for its Brownian capacity (see for instance \cite{PortSton78}, p.~57-58). Incidentally, some thickening $\wt{L}_0(N)$ of the component $\cC^u_N$ is required for the aymptotic upper bound (\ref{0.8}) to hold true, see Remark \ref{rem2.5} 4). How small $\wt{L}_0(N)$ can be chosen is a presently open question.

We now turn to the geometric controls. We consider $F^u_N$ the $\IR^d$-filling of $\frac{1}{N} \;\wt{\cW}_N^u$ $(\subseteq \frac{1}{N} \;\IZ^d)$, namely:
\begin{equation}\label{0.9}
F^u_N = \big\{z \in \IR^d; d_\infty \big(z, \mbox{\normalsize $\frac{1}{N}$} \;\wt{\cW}^u_N\big) \le \mbox{\normalsize $\frac{1}{N}$}\big\},
\end{equation}
where for $z \in \IR^d$, $A \subseteq \IR^d$, $d_\infty (z,A) =  \inf\{|z-a|_\infty$; $a \in A\}$ denotes the sup-norm distance of $z$ to $A$. We let $\delta(F^u_N, B_\nu)$ stand for the minimal volume of the symmetric difference of $F^u_N$ with a translate of $B_\nu$ (see also (\ref{1.11})):
\begin{equation}\label{0.10}
\delta(F^u_N, B_\nu) = \min\limits_{z \in \IR^d} \;|F^u_N \, \Delta (B_\nu + z)|
\end{equation}
(with $\Delta$ the symmetric difference and $|A|$ the volume of $A$ Borel subset of $\IR^d$).

We further prove in Theorem \ref{theo2.1} that with $\wt{L}_0(N)$ as in (\ref{0.8}),
\begin{equation}\label{0.11}
\begin{array}{l}
 \mbox{for any $0 < u < \ov{u}$, $\nu > 0$, and $\mu > 0$, one has} 
 \\[1ex]
\limsup\limits_N \;\frac{1}{N^{d-2}} \log \IP[|\wt{\cW}^u_N|  \ge \nu N^d,\, \delta(F^u_N, B_\nu) \ge \mu] \le
\\[-0.7ex]
\hspace{4.3cm} - \frac{1}{d} (\sqrt{\ov{u}} - \sqrt{u})^2 \big({\rm cap}(B_\nu) + c_1 (\nu, \mu)\big)
\end{array}
\end{equation}
with $c_1(\nu,\mu)$ a positive constant solely depending on $d,\nu$ and $\mu$.

We also derive lower bounds in Section 3. If we let
\begin{equation}\label{0.12}
\mbox{$\omega_d =$ the volume of a Euclidean ball in $\IR^d$ of unit radius,}
\end{equation}
we show in Theorem \ref{theo2.4} that
\begin{equation}\label{0.13}
\left\{\begin{array}{l}
 \mbox{for any $\wt{L}_0(N)$ as in (\ref{0.3}), $0 < u < u_{**}$ and $0  < \nu < \omega_d$},
 \\
\liminf\limits_N \, \frac{1}{N^{d-2}} \log \IP[|\wt{\cW}^u_N| \ge \nu N^d] \ge - \frac{1}{d} (\sqrt{u}_{**} - \sqrt{u})^2 {\rm cap}(B_\nu).
\end{array}\right.
\end{equation}
The restriction $\nu < \omega_d$ in the lower bound implies that $B_\nu$ is contained in $(-1,1)^d$. This feature enables us to avoid boundary effects: it ensures that $B_\nu$ is a subset of $(-1,1)^d$, which has minimal capacity among all subsets of $(-1,1)^d$ with volume $\nu$ (see also the end of Section 2). Note that unlike the case of the upper bound (\ref{0.8}), no thickening of the component $\cC^u_N$ is needed for the lower bound  (\ref{0.13}) (i.e. one may choose $\wt{L}_0(N) = 0$). Incidentally, when $0 < u < u_{**}$, $0  < \nu < \omega_d$, and $0  < \mu < \omega_d - \nu$, the asymptotic lower bound (\ref{0.13}) applied with $\nu+\mu$ in place of $\nu$, and the observation that $\delta(F^u_N, B_\nu) \ge \mu$ when $|\wt{\cW}^u_N| \ge (\nu + \mu) N^d$, readily provides a similar looking asymptotic lower bound for the probability in (\ref{0.11}) with $u_{**}$ in place of $\ov{u}$, and $c'_1 (\nu, \mu) = {\rm cap}(B_{\nu + \mu}) - {\rm cap}(B_\nu)$ in place of $c_1 (\nu, \mu)$.

If the critical levels $\ov{u} \le u_* \le u_{**}$ coincide, then Theorems \ref{theo2.1} and \ref{theo2.4} show that with $\wt{L}_0(N)$ as in (\ref{0.8})
\begin{equation}\label{0.14}
\begin{array}{l}
\lim\limits_N \;\mbox{\no $\frac{1}{N^{d-2}}$} \log \IP[|\wt{\cW}^u_N| \ge \nu N^d] = - \mbox{\no $\frac{1}{d}$} (\sqrt{u}_* - \sqrt{u})^2 {\rm cap} (B_\nu), 
\\[1ex]
\mbox{for $0 < u < u_*$ and $0 < \nu < \omega_d$,}
\end{array}
\end{equation}
and that conditionally on $\{|\wt{\cW}^u_N| \ge \nu N^d\}$, the set $F^u_N$ is close to a translate of $B_\nu$:
\begin{equation}\label{0.15}
\begin{array}{l}
\lim\limits_N \IE\big[\delta (F^u_N, B_\nu) \,\big| \, |\wt{\cW}^u_N| \ge \nu N^d\big] = 0, 
\\
\mbox{for $0 < u < u_*$ and $0 < \nu < \omega_d$}.
\end{array}
\end{equation}
What happens inside the hole $\wt{\cW}_N^u$ left in $B(0,N)$ by $\wt{\cC}^u_N$ is however unclear, see Remark \ref{rem2.5} 3). Still assuming the equality of $\ov{u} \le u_* \le u_{**}$, the asymptotics (\ref{0.14}) and (\ref{0.15}) should also be contrasted with (\ref{0.7}) now governing the case $u > u_*$.

Let us now turn to the case of the simple random walk, which at a heuristic level corresponds to setting $u = 0$ for the random interlacements. We denote by $(X_n)_{n \ge 0}$ the canonical simple random walk on $\IZ^d$ (we recall that $d \ge 3)$, by $P_x$ its canonical law when starting from $x$ in $\IZ^d$, and by $E_x$ the corresponding expectation. We write $\cI = \{X_n; n \ge 0\} \subseteq \IZ^d$ for the set of points visited by the walk, and $\cV = \IZ^d \backslash \cI$ for the corresponding vacant set. In analogy with (\ref{0.2}), we introduce for $N \ge 1$
\begin{equation}\label{0.16}
\begin{array}{l}
\mbox{$\cC_N =$ the connected component of $S_N$ in $\cV \cup S_N$}
\\
\mbox{(so, by convention $S_N \subseteq \cC_N$)}.
\end{array}
\end{equation}
For a sequence $\wt{L}_0(N)$ as in (\ref{0.3}), we then define
\begin{equation}\label{0.17}
\mbox{$\wt{\cC}_N =$ the $\wt{L}_0$-neighborhood of $\cC_N$ in $| \cdot |_\infty$-distance,}
\end{equation}
as well as the respective complements of $\cC_N$ and $\wt{\cC}_N$ in $B(0,N)$:
\begin{align}
\cW_N = &B(0,N) \; \backslash \; \cC_N, \label{0.18}
\\[1ex]
\wt{\cW}_N = &B(0,N) \; \backslash \; \wt{\cC}_N, \label{0.19}
\end{align}
In Theorem \ref{theo3.1}, we show that
\begin{equation}\label{0.20}
\left\{ \begin{array}{l}
\mbox{with $\wt{L}_0(N)$ as in (\ref{0.8}), for any $\nu \ge 0$},
\\[1ex]
\limsup\limits_N \;\frac{1}{N^{d-2}} \log P_0 [| \wt{\cW}_N| \ge \nu N^d] \le -  \frac{1}{d} \;\ov{u} \;{\rm cap} (B_\nu).
\end{array}\right.
\end{equation}
Introducing the $\IR^d$-filling of $\frac{1}{n} \,\wt{\cW}_N (\subseteq \frac{1}{N} \,\IZ^d)$, namely
\begin{equation}\label{0.21}
F_N = \big\{z \in \IR^d; d_\infty\big(z, \mbox{\no $\frac{1}{N}$} \;\wt{\cW}_N\big) \le \mbox{\no $\frac{1}{N}$}\big\},
\end{equation}
one has with similar notation as in (\ref{0.10}) (see also (\ref{1.11})):
\begin{equation}\label{0.22}
\begin{split}
\mbox{for any $\nu > 0$ and $\mu > 0$},\
\\[1ex]
\limsup\limits_N \;\mbox{\no $\frac{1}{N^{d-2}}$} \log P_0 [| \wt{\cW}_N|& \ge \nu N^d,\,\delta(F_N,B_\nu) \ge \mu]  
\\[-1ex]
&\le - \mbox{\no $\frac{1}{d}$} \;\ov{u} \big({\rm cap}(B_\nu) + c_2 (\nu, \mu)\big)
\end{split}
\end{equation}
with $c_2(\nu,\mu)$ a positive constant solely depending on $d,\nu,\mu$.

We derive a lower bound in Theorem \ref{theo3.2} and show that
\begin{equation}\label{0.23}
\left\{ \begin{array}{l}
\mbox{for any $\wt{L}_0(N)$ as in (\ref{0.3}) and $0 < \nu < \omega_d$},
\\[1ex]
\liminf\limits_N \;\mbox{\no $\frac{1}{N^{d-2}}$} \log P_0 [| \wt{\cW}_N| \ge \nu N^d] \ge - \mbox{\no $\frac{1}{d}$} \;u_{**} \,{\rm cap}(B_\nu).
\end{array}\right.
\end{equation}
Along the same lines as explained above (\ref{0.14}), when $0  < \nu < \omega_d$, and $0  < \mu < \omega_d - \nu$, the asymptotic lower bound (\ref{0.23}) can be used to produce an asymptotic lower bound for the probability in (\ref{0.22}) with $u_{**}$ in place of $\ov{u}$, and $c'_1 (\nu, \mu) = {\rm cap}(B_{\nu + \mu}) - {\rm cap}(B_\nu)$ in place of $c_2 (\nu, \mu)$.

Again, if the equalities $\ov{u} = u_* = u_{**}$ hold, then with $\wt{L}_0(N)$ as in (\ref{0.8}),
\begin{equation}\label{0.24}
\lim\limits_N \; \mbox{\no $\frac{1}{N^{d-2}}$} \log P_0 [| \wt{\cW}_N| \ge \nu N^d]  = -\mbox{\no $\frac{u_*}{d}$} \; {\rm cap} (B_\nu), \; \mbox{for $0 < \nu < \omega_d$},
\end{equation}
and conditionally on $\{|\wt{\cW}_N| \ge \nu N^d\}$, the set $F_N$ is close to a translate of $B_\nu$:
\begin{equation}\label{0.25}
\lim\limits_N E_0\big[\delta(F_N,B_\nu) \,\big| \, |\wt{\cW}_N| \ge \nu N^d\big] = 0, \;\mbox{for $0 < \nu < \omega_d$}.
\end{equation}
Incidentally, what happens inside the hole $\wt{\cW}_N$ left in $B(0,N)$ by $\wt{\cC}_N$ is unclear, see Remark \ref{rem3.3} 2).

We then turn to the results corresponding to the Gaussian free field. In place of $\cV^u$ or $\cV$, we now consider the excursion sets $E^{\ge \alpha} = \{x \in \IZ^d$; $\varphi_x \ge \alpha\}$, where $(\varphi_x)_{x \in \IZ^d}$ stands for the canonical Gaussian free field on $\IZ^d$, $d \ge 3$. We let $\IP^G$ stand for its canonical law and $\IE^G$ for the corresponding expectation. There are now critical levels $-\infty < \ov{h} \le h_* \le h_{**} < \infty$, see (\ref{0.4}), (\ref{0.5}), and (5.3) of \cite{Szni15}, so that $E^{\ge \alpha}$ is in the strongly non-percolative regime for $h > h_{**}$, in the strongly percolative regime for $h < \ov{h}$, and $h_*$ denotes the threshold for the percolation of $E^{\ge \alpha}$. It has recently been proved that $h_* > 0$, cf.~\cite{DrewPrevRodr}, this was previously only known for large $d$, see \cite{RodrSzni13}, \cite{DrewRodr15}. It is plausible, but open at the moment, that actually $\ov{h} = h_* = h_{**}$ (possibly some progress in proving $h_* = h_{**}$ may come from \cite{DumiRaouTass17}). The investigation of level-set percolation of the Gaussian free field was launched in \cite{LeboSale86}, \cite{BricLeboMaes87}, see also \cite{MolcStep83}. There are also deep links between random interlacements and the level sets of the Gaussian free field, see \cite{Lupu16} and Section 2 of \cite{Szni16}. Further, the results of \cite{Szni15} on disconnection have recently been sharpened in \cite{Nitz} and \cite{ChiaNitz}  with the help of the methods of \cite{NitzSzni}.

In analogy with what we did for random interlacements and for the simple random walk, we introduce for $\alpha \in \IR$ and $N \ge 1$
\begin{equation}\label{0.26}
\begin{array}{l}
\mbox{$\cC^{\ge \alpha}_N =$ the connected component in $E^{\ge \alpha} \cup S_N$ of $S_N$}
\\
\mbox{(so, by convention $S_N \subseteq \cC^{\ge \alpha}_N$)}.
\end{array}
\end{equation}
For $\wt{L}_0(N)$, a sequence as in (\ref{0.3}), we then define
\begin{equation}\label{0.27}
\mbox{$\wt{\cC}^{\ge \alpha}_N =$ the $\wt{L}_0$-neighborhood of $\cC^{\ge \alpha}_N$ in the $|\cdot |_\infty$-distance},
\end{equation}
as well as the respective complements of $\cC^{\ge \alpha}_N$ and $\wt{\cC}^{\ge \alpha}_N$ in $B(0,N)$:
\begin{align}
\cW^{\ge \alpha}_N = & B(0,N) \;\backslash \;\cC^{\ge \alpha}_N, \label{0.28}
\\[1ex]
\wt{\cW}^{\ge \alpha}_N = & B(0,N)\; \backslash \; \wt{\cC}_N^{\ge \alpha}. \label{0.29}
\end{align}
When $\alpha > h_{**}$, the excursion set $E^{\ge \alpha}$ is in a strongly non-percolative regime, and the probability that there is a path in $E^{\ge \alpha}$ between $0$ and $x$ decays exponentially in $|x|_\infty$, when $d \ge 4$, with a logarithmic correction, when $d = 3$, cf.~\cite{PopoRath15}. As a result, for $\alpha > h_{**}$, $\wt{\cW}_N^{\ge \alpha}$ has nearly full volume in $B(0,N)$ and analogously to (\ref{0.7})
\begin{equation}\label{0.30}
\begin{array}{l}
\mbox{when $\alpha > h_{**}$, for any $\wt{L}_0(N)$ as in (\ref{0.3})}, 
\\[0.5ex]
\mbox{$|\wt{\cW}_N^{\ge \alpha}| / | B(0,N)| \underset{N}{\longrightarrow} 1$ in $\IP^G$-probability}.
\end{array}
\end{equation}
As we show in Theorem \ref{theo4.1}, the behavior is different in the strongly percolative regime $\alpha < \ov{h}$:
\begin{equation}\label{0.31}
\left\{ \begin{array}{l}
\mbox{there exists $\wt{L}_0(N)$ satisfying (\ref{0.3}) such that}
\\
\mbox{for any $\alpha < \ov{h}$ and $\nu > 0$,}
\\
\limsup\limits_N \;\mbox{\no $\frac{1}{N^{d-2}}$} \;\log \IP^G[ | \wt{\cW}^{\ge \alpha}_N | \ge \nu N^d] \le - \mbox{\no $\frac{1}{2d}$} \;(\ov{h} - \alpha)^2 \,{\rm cap} (B_\nu)
\end{array}\right.
\end{equation}
(with $B_\nu$ as in (\ref{0.8})).

In addition, if $F_N^{\ge \alpha}$ denotes the $\IR^d$-filling of $\frac{1}{N} \; \wt{\cW}_N^{\ge \alpha}$ $(\subseteq \frac{1}{N} \;\IZ^d)$:
\begin{equation}\label{0.32}
F^{\ge \alpha}_N = \big\{z \in \IR^d; \; d_\infty \big(z,\mbox{\no $\frac{1}{N}$} \;\wt{\cW}^{\ge \alpha}_N\big) \le \mbox{\no $\frac{1}{N}$}\big\},
\end{equation}

with similar notation as in (\ref{0.10}) (see also (\ref{1.11})), we show that with $\wt{L}_0(N)$ as in (\ref{0.31}), we have
\begin{equation}\label{0.33}
\begin{array}{l}
\mbox{for any $\alpha < \ov{h}$, $\nu > 0$ and $\mu \ge 0$,}
\\[1ex]
\limsup\limits_N \;\mbox{\no $\frac{1}{N^{d-2}}$} \;\log \IP^G[ | \wt{\cW}^{\ge \alpha}_N|  \ge \nu N^d, \delta(F^{\ge \alpha}_N, B_\nu) \ge \mu]  
\\
\le - \mbox{\no $\frac{1}{2d}$} \;(\ov{h} - \alpha)^2 \big({\rm cap}(B_\nu) + c_1 (\nu, \mu)\big)
\end{array}
\end{equation}
(with $c_1(\nu,\mu)$ as in (\ref{0.11})).

We also derive an asymptotic lower bound in Theorem \ref{theo4.2}:
\begin{equation}\label{0.34}
\left\{ \begin{array}{l}
\mbox{for any $\wt{L}_0(N)$ as in (\ref{0.3}), $\alpha < h_{**}$, and $0 < \nu < \omega_d$, one has}
\\[1ex]
\liminf\limits_N \;\mbox{\no $\frac{1}{N^{d-2}}$} \;\log \IP^G[ | \wt{\cW}^{\ge \alpha}_N | \ge \nu N^d] \ge - \mbox{\no $\frac{1}{2d}$} \;(h_{**} - \alpha)^2 {\rm cap}(B_\nu)
\end{array}\right.
\end{equation}
(with $\omega_d$ as in (\ref{0.12})). By a similar argument as explained above (\ref{0.14}), when $\alpha < h_{**}$, $0  < \nu < \omega_d$, and $0  < \mu < \omega_d - \nu$, the asymptotic lower bound (\ref{0.34}) can be used to produce an asymptotic lower bound for the probability in (\ref{0.33}) with $h_{**}$ in place of $\ov{h}$, and $c'_1 (\nu, \mu) = {\rm cap}(B_{\nu + \mu}) - {\rm cap}(B_\nu)$ in place of $c_1 (\nu, \mu)$.

Again, if the equalities $\ov{h} = h_* = h_{**}$ hold, then Theorem \ref{theo4.1} and \ref{theo4.2} show that with $\wt{L}_0(N)$ as in Theorem \ref{theo4.1}, one has
\begin{equation}\label{0.35}
\begin{array}{l}
\lim\limits_N \; \mbox{\no $\frac{1}{N^{d-2}}$} \;\log \IP^G[ | \wt{\cW}^{\ge \alpha}_N | \ge \nu N^d] = - \mbox{\no $\frac{1}{2d}$}\; (h_* - \alpha)^2\, {\rm cap}(B_\nu)
\\
\mbox{for $\alpha < h_*$ and $0 < \nu < \omega_d$},
\end{array}
\end{equation}
and conditionally on $\{ | \wt{\cW}^{\ge \alpha}_N | \ge \nu N^d\}$, the set $F_N^{\ge \alpha}$ is close to a translate of $B_\nu$:
\begin{equation}\label{0.36}
\begin{array}{l}
\lim\limits_N \; \IE^G[\delta(F^{\ge \alpha}_N, B_\nu)  \, \big|\, |\wt{\cW}^{\ge \alpha}_N | \ge  \nu N^d] = 0, 
\\
\mbox{for $\alpha < h_*$ and $0 < \nu < \omega_d$},
\end{array}
\end{equation}
and (\ref{0.35}), (\ref{0.36}) should also be contrasted with (\ref{0.30}) now governing the case $\alpha > h_*$ (when $\ov{h} \le h_* \le h_{**}$ coincide). Still, what happens inside the hole $\wt{\cW}^{\ge \alpha}_N$ left in $B(0,N)$ by $\wt{\cC}^{\ge \alpha}_N$ is unclear, see Remark \ref{rem4.3} 2).

Let us say a few words about proofs. The most challenging part concerns the derivation of the upper bounds in the case of random interlacements in Theorem \ref{theo2.1}, see also (\ref{0.8}), (\ref{0.11}), and in the case of the Gaussian free field in Theorem \ref{theo4.1}, see also (\ref{0.31}), (\ref{0.33}). We heavily rely on the type of coarse graining procedure of \cite{NitzSzni}, Section 4, leading to the construction of suitable porous interfaces, and on the capacity lower bounds from \cite{NitzSzni} that are recalled in Section 1. Let us stress the following feature. Although the optimal shape $B_\nu$ ``governing the problem'' is convex, the coarse graining procedure must rule out non-convex shapes and corresponding porous interfaces and establish that they are more costly. The quantitative geometric controls in (\ref{0.11}), (\ref{0.22}), (\ref{0.33}) rely on the bounds of the Fraenkel asymmetry in terms of the capacity excess due to \cite{FuscMaggPrat09} that are recalled in Section 1. They lead to a certain coercivity property of the capacity stated in Lemma \ref{lem2.3}. The lower bounds for their part come as rather direct applications of the results of \cite{LiSzni14} in the case of random interlacements, of \cite{Li17} in the case of the simple random walk, and of \cite{Szni15} in the case of the Gaussian free field.

Let us now describe the organization of the article. In Section 2 we recall the asymptotic capacity lower bounds from \cite{NitzSzni}, the change of probability method, and the bounds on the Fraenkel asymmetry in terms of the capacity excess due to \cite{FuscMaggPrat09}. In Section 3 we treat the case of random interlacements. The upper bounds appear in Theorem \ref{theo2.1} and the lower bounds in Theorem \ref{theo2.4}. Section 4 contains the results for the case of the simple random walk. The upper bounds appear in Theorem \ref{theo3.1} and the lower bounds in Theorem \ref{theo3.2}. Finally, Section 5 discusses the case of the Gaussian free field. The upper bounds are contained in Theorem \ref{theo4.1} and the lower bounds in Theorem \ref{theo4.2}.

Our convention concerning constants is the following. We denote by $c,c',\wt{c}$ positive constants changing from place to place that simply depend on the dimension $d$. Numbered constants $c_0,c_1,c_2$ refer to the value corresponding to their first appearance in the the text. Dependence on additional parameters appears in the notation.

\section{Some useful facts}

In this section we will recall some results that will be helpful in the next sections. In particular, we will recall a capacity lower bound from \cite{NitzSzni} that will play an important role in the derivations of the asymptotic upper bounds, both in Theorems \ref{theo2.1} and \ref{theo4.1}. We will also recall the relative entropy inequality that underpins the change of probability method in the proofs of the asymptotic lower bounds in Theorems \ref{2.4}, \ref{theo3.2}, and \ref{theo4.2}. Finally, we will recall the quantitative strengthening due to \cite{FuscMaggPrat09} of the Polya-Szeg\"o's Inequality. It provides a control on the Fraenkel asymmetry of a bounded open set in terms of its capacity excess.

Throughout the article, unless explicitly stated otherwise, we assume that $d \ge 3$. For $x$ in $\IR^d$ and $r \ge 0$, $B_{\IR^d}(x,r)$ will stand for the closed ball in $|\cdot |_\infty$-distance with center $x$ and radius $r$, whereas $B_2(x,r)$ will denote the corresponding closed Euclidean ball with center $x$ and radius $r$. For a Borel subset $A \subseteq \IR^d$, we also let $|A|$ stand for the Lebesgue measure of $A$.

We first recall the capacity lower bound for ``porous interfaces'' surrounding a compact subset $A$ in $\IR^d$. We consider a non-empty bounded Borel subset $U_0$ in $\IR^d$ and $U_1 = \IR^d \backslash U_0$ its complement, as well as $S = \partial U_0 = \partial U_1$ their boundary. We measure the local density of $U_1$ at $x$ in dyadic scales via
\begin{equation}\label{1.1}
\wh{\sigma}_\ell(x) = |B_{\IR^d}(x,2^{-\ell}) \cap U_1| \, / \, |B_{\IR^d}(x,2^{-\ell})|, \;\mbox{for $x \in \IR^d$ and $\ell \in \IZ$}.
\end{equation}
For $A$ compact subset of $\IR^d$ and $\ell_* \ge 0$, we define
\begin{equation}\label{1.2}
\begin{split}
\cU_{\ell_*,A} = & \;\mbox{the collection of bounded subsets $U_0$ of $\IR^d$ such that}
\\[-0.5ex]
& \;\mbox{$\wh{\sigma}_\ell(x) \le \mbox{\no $\frac{1}{2}$}$ for all $x \in A$ and $\ell \ge \ell_*$}
\end{split}
\end{equation}
(for instance, any $U_0$ such that $d_\infty(x,U_1) \ge 2^{-\ell_*}$ for each $x$ in $U_0$ belongs to $\cU_{\ell_*,A}$, where the notation $d_\infty(x,U_1)$ stands for the $| \cdot|_\infty$-distance from $x$ to $U_1$, see below (\ref{0.9})).

Given $U_0$ a bounded non-empty Borel subset of $\IR^d$, and $\ve > 0$, $\eta > 0$, the ``porous interfaces'' we consider correspond to
\begin{equation}\label{1.3}
\begin{split}
\cS_{U_0,\ve,\eta} = & \;\mbox{the class of compact subsets $\Sigma$ of $\IR^d$ such that for all}
\\[-0.5ex]
& \;\mbox{$z\in \partial U_0$, $W_z[$Brownian motion enters $\Sigma$ strictly before}
\\[-0.5ex]
&\; \mbox{moving at $| \cdot |_\infty$-distance $\ve] \ge \eta$}
\end{split}
\end{equation}
(where $W_z$ denotes the Wiener measure starting at $z$ and that governs the canonical Brownian motion in (\ref{1.3})).

Thus, $\ve$ controls the distance from $S = \partial U_0$ at which the porous interface $\Sigma$ is felt and $\eta$ the strength with which it is felt. We can now quote the capacity lower bound contained in (3.16) of Corollary 3.4 of \cite{NitzSzni} that will play an important role in the derivations of Theorems \ref{theo2.1} and \ref{theo4.1}. It states that for all $\eta \in (0,1)$ one has:
\begin{equation}\label{1.4}
\lim\limits_{v \r 0} \;\;\inf\limits_{\ve \le v\,2^{-\ell_*}} \;\;\inf\limits_A \;\;\inf\limits_{U_0 \in \cU_{\ell_*,A}} \;\;\inf\limits_{\Sigma \in \cS_{U_0,\ve,\eta}} {\rm cap}(\Sigma) \, / \, {\rm cap}(A) = 1
\end{equation}
where $A$ varies in the class of compact subsets of $\IR^d$ with positive capacity in the above infimum.

We next recall the classical inequality concerning the relative entropy that underpins the change of probability method. It will be used in the proofs of the lower bounds in Theorems \ref{2.4}, \ref{theo3.2}, and \ref{theo4.2}. Given a probability $\wt{P}$ absolutely continuous with respect to $P$, the relative entropy of $\wt{P}$ relative to $P$ is
\begin{equation}\label{1.5}
H(\wt{P} \,| \, P) = \wt{E}\,\Big[ \log \mbox{\no $\frac{d\wt{P}}{dP}$}\Big] = E\,\Big[ \mbox{\no $\frac{d\wt{P}}{dP}$} \log \mbox{\no $\frac{d\wt{P}}{dP}$} \Big] \in [0,\infty]
\end{equation}
(we denote by $\wt{E}$ and $E$ the respective expectations with respect to $\wt{P}$ and $P$). Then, the above mentioned inequality states that for an event $A$ with $\wt{P}(A) > 0$, one has (see \cite{DeusStro89}, p.~76)
\begin{equation}\label{1.6}
P[A] \ge \wt{P}[A] \;\exp\big\{- \mbox{\no $\frac{1}{\wt{P}[A]}$} \big(H(\wt{P}\,| \,P) + \mbox{\no $\frac{1}{e}$}\big)\big\}.
\end{equation}
Next, we turn to the quantitative version of Polya-Szeg\"o's Inequality derived in \cite{FuscMaggPrat09}. We recall that $\omega_d = |B_2(0,1)|$ stands for the volume of the Euclidean ball of unit radius, see (\ref{0.12}), and we let $\kappa_d$ stand for its capacity:
\begin{equation}\label{1.7}
\kappa_d = {\rm cap}(B_2(0,1)).
\end{equation}
Note that given a non-empty open set $U$ of finite volume, $\kappa_d (\frac{|U|}{\omega_d})^{d-2}$ is the capacity of a Euclidean ball with volume $|U|$. One defines the capacity excess of $U$ as:
\begin{equation}\label{1.8}
\eta_U = \mbox{\no $\frac{{\rm cap}(U)}{\kappa_d (\frac{|U|}{\omega_d})^{d-2}}$} - 1.
\end{equation}
The Polya-Szeg\"o Inequality states that $\eta_U \ge 0$. As shown in \cite{FuscMaggPrat09}, $\eta_U$ actually controls the Fraenkel asymmetry of $U$. We recall that for a Borel subset of $E$ of $\IR^d$, with $0 < |E| < \infty$, the Fraenkel asymmetry of $E$ is (with $\omega_d \,R^d_E = |E|$),
\begin{equation}\label{1.9}
\lambda_E = \min\limits_{a \in \IR^d} \;  \mbox{\no $\frac{|E \Delta B_2(a,R_E)|}{|E|}$} \in [0,2) \; \mbox{(with $\Delta$ the symmetric difference)}.
\end{equation}
Note that $|E| \,\lambda_E$ is the $L^1(dx)$-distance (where $dx$ stands for the Lebesgue measure) of $1_E$ to the closed subset of $L^1(dx)$ consisting of translates of $1_{B_2(0,R_E)}$ (that is, $|E| \,\lambda_E = \delta(E,B(0,R_E)$) in the notation (\ref{1.11}) below). Theorem 1.2 of \cite{FuscMaggPrat09} states that for any open set $U$ in $\IR^d$ of positive finite measure, one has (we refer to the end of the Introduction for our convention concerning constants):
\begin{equation}\label{1.10}
\eta_U \ge c_0 \,\lambda^4_U.
\end{equation}
This inequality will enter the proof of Theorems \ref{theo2.1} and \ref{theo4.1}, when we establish the statements corresponding to (\ref{0.11}) and (\ref{0.33}).

Finally, we record here the notation underlying (\ref{0.10}). We first observe that given any Borel subset $F \subseteq \IR^d$, with finite volume $|F|$, the collection of indicator functions of translates $F + z$, $z \in \IR^d$, of $F$ constitutes a closed subset of $L^1(dx)$. Thus, for $E, F$ Borel subsets of $\IR^d$ with finite volume, we set $\delta (E,F)$ to be the $L^1$-distance of $1_E$ to this set of translates, so that (with $\Delta$ standing for the symmetric difference)
\begin{equation}\label{1.11}
\delta(E,F) = \inf\limits_{z \in E} |E \Delta (F+z) \,| \,(= \delta(F,E)\;).
\end{equation}

\section{Random interlacements}

In this section we will state and prove the large deviation upper bound corresponding to (\ref{0.8}) in Theorem \ref{theo2.1} and the lower bound corresponding to (\ref{0.13}) in Theorem \ref{theo2.4}. In Theorem \ref{theo2.1}, we will also derive an additional control stated in (\ref{0.11}) on the proximity to some translate of a ball of volume $\nu$ of the $\IR^d$-filling $F^u_N$ of $\frac{1}{N} \,\wt{W}^u_N$, see (\ref{0.9}). In doing so, we will prove a certain coercivity inequality for the capacity in Lemma \ref{lem2.2} that will also be of use in Section 5. 

We keep the same notation as in the Introduction, see in particular (\ref{0.1}) to (\ref{0.6}).
 
We begin with the upper bound on the occurrence of a macroscopic volume for $\wt{\cW}^u_N$ in the strongly percolative regime $0 < u < \ov{u}$, see below (\ref{0.6}), and on how close $F^u_N$ is to a translate of $B_\nu$ (the closed Euclidean ball with volume $\nu$ centered at the origin). 

\begin{thm}\label{theo2.1} (upper bound)

There exists $\wt{L}_0(N)$ satisfying (\ref{0.3}) such that for any $0 < u < \ov{u}$ and $\nu > 0$ 
\begin{equation}\label{2.1}
\limsup\limits_N \; \mbox{\no $\frac{1}{N^{d-2}}$} \; \log \IP[\,|\wt{W}^u_N| \ge \nu N^d] \le - \mbox{\no $\frac{1}{d}$} \;(\sqrt{\ov{u}} - \sqrt{u})^2 \,{\rm cap}(B_\nu),
\end{equation}
and so that for any $\eta > 0$,
\begin{equation}\label{2.2}
\begin{array}{l}
\limsup\limits_N \; \mbox{\no $\frac{1}{N^{d-2}}$} \; \log \IP[\,|\wt{W}^u_N| \ge \nu N^d, \delta (F^u_N, B_\nu) \ge \mu] 
\\[1ex]
 \le - \mbox{\no $\frac{1}{d}$} \;(\sqrt{\ov{u}} - \sqrt{u})^2 \,\big({\rm cap}(B_\nu) + c_1 (\nu, \mu)\big),
\end{array}
\end{equation}
where $\delta(F^u_N,B_\nu)$ is defined in (\ref{0.10}), see also (\ref{1.11}).
\end{thm}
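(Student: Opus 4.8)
The plan is to implement a coarse graining of the "hole" $\wt{\cW}^u_N$ at an intermediate scale, reduce the event $\{|\wt{\cW}^u_N| \ge \nu N^d\}$ to the occurrence of a porous interface surrounding a macroscopic set, and then feed this into the capacity lower bound \eqref{1.4} together with the exponential cost of vacant crossings for random interlacements. Concretely, fix a large box scale $L_1 = L_1(N)$ with $\wt{L}_0(N) \ll L_1(N) \ll N$, tile $B(0,N)$ by boxes of side $L_1$, and call a box "good" if the local configuration of $\cV^u$ inside a slightly enlarged box looks strongly percolative (in the sense used in \cite{NitzSzni}, Section 4), i.e. it contains a crossing cluster of $\cV^u$ in each coordinate direction and all "large" $\cV^u$-components in neighboring boxes are connected through it. The key deterministic observation is that on the event $\{|\wt{\cW}^u_N| \ge \nu N^d\}$, the union of the good boxes cannot connect $S_N$ to a region of total volume $\gtrsim \nu N^d$ worth of boxes; hence the bad boxes must form, at the coarse-grained level, a "porous interface" $\Sigma$ (after rescaling by $1/N$) separating $S_N$ from a macroscopic compact set $A \subseteq (-1,1)^d$ with $|A| \gtrsim \nu$, and $\Sigma \in \cS_{U_0,\ve,\eta}$ for suitable $\ve \to 0$, $\eta \in (0,1)$ depending only on $d$, with $U_0 \in \cU_{\ell_*,A}$. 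This is precisely the combinatorial/geometric skeleton supplied by \cite{NitzSzni}, and I would quote it rather than reprove it.

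Next I would estimate the probability that a prescribed collection of boxes is all bad. By the strong percolativity hypothesis $u < \ov{u}$ and the sprinkling/decoupling inequalities for random interlacements, a single box is bad with probability at most $e^{-c L_1^{d-2}}$ for the un-sprinkled field, but the relevant and sharp bound is the one controlling the probability that the \emph{thickened} component $\wt{\cC}^u_N$ fails to absorb the vacant cluster: here one runs the change-of-level (sprinkling) argument that turns $\cV^u$ at level $u$ into $\cV^{u'}$ at a slightly higher level $u' < \ov{u}$, paying a factor whose exponential rate is governed by $\mathrm{cap}$ of the coarse-grained interface. Combining the union bound over the (polynomially many, by the coarse graining bookkeeping of \cite{NitzSzni}) admissible interface shapes $\Sigma$ with the per-shape cost $\exp\{-\tfrac1d(\sqrt{\ov u}-\sqrt u)^2\,\mathrm{cap}(\Sigma)\cdot N^{d-2}(1+o(1))\}$ and then applying \eqref{1.4} to get $\mathrm{cap}(\Sigma) \ge (1-o(1))\,\mathrm{cap}(A) \ge (1-o(1))\,\mathrm{cap}(B_\nu)$ — the last inequality because $A$ has volume at least $\nu$ and is contained in $(-1,1)^d$ so the spherical isocapacitary inequality (Polya–Szegő) applies and $B_\nu \subseteq (-1,1)^d$ — yields \eqref{2.1}. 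The factor $\frac1d$ and the exponent $(\sqrt{\ov u}-\sqrt u)^2$ come from optimizing the sprinkling: raising the level by $\delta u$ costs rate $\tfrac{(\delta u)^2}{4u}\mathrm{cap}$-ish, and the constrained optimization over how to split $u \to \ov u$ produces exactly $\tfrac1d(\sqrt{\ov u}-\sqrt u)^2$; I would cite the corresponding computation from \cite{NitzSzni} / \cite{Szni17}.

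For the refined bound \eqref{2.2}, the extra input is that when additionally $\delta(F^u_N,B_\nu)\ge\mu$, the macroscopic set $A$ produced above is not just of volume $\ge\nu$ but is also $L^1$-far (by a constant $c(\nu,\mu)$) from every translate of $B_\nu$; equivalently its Fraenkel asymmetry $\lambda_A$ is bounded below. Then by the Fuscо–Maggi–Pratelli inequality \eqref{1.10}, $\eta_A \ge c_0\lambda_A^4 \ge c_0 c(\nu,\mu)^4 > 0$, hence $\mathrm{cap}(A) \ge (1+c_0\lambda_A^4)\,\kappa_d(|A|/\omega_d)^{d-2} \ge \mathrm{cap}(B_\nu) + c_1(\nu,\mu)$ for a positive $c_1$ depending only on $d,\nu,\mu$ — this is the coercivity statement that I expect is isolated as Lemma 2.2 (= \ref{lem2.3}). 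One small technical point: $F^u_N$ is the filled discrete hole, $A$ is its coarse-grained version, and $\delta$, $\lambda$ are defined for open sets, so I would spend a line passing between $F^u_N$, a slightly inflated open neighborhood of $A$, and $A$ itself, controlling the volume and the $L^1$-distance discrepancies by $o(1)$ since $L_1/N \to 0$. Plugging $\mathrm{cap}(\Sigma) \ge (1-o(1))(\mathrm{cap}(B_\nu)+c_1(\nu,\mu))$ into the same union bound gives \eqref{2.2}.

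\textbf{Main obstacle.} The genuinely hard step is the coarse graining itself: showing that bad boxes must organize into an \emph{honestly porous} interface $\Sigma \in \cS_{U_0,\ve,\eta}$ with controlled combinatorial complexity, while simultaneously ruling out that the cheapest way to create the hole is via a thin, non-convex, or fractal-looking bad region whose capacity is far below $\mathrm{cap}(B_\nu)$. That the optimal macroscopic shape is the convex ball $B_\nu$ is forced only \emph{a posteriori}, through the isocapacitary inequality applied to $A$ after the capacity lower bound \eqref{1.4} has been invoked; getting the interface into the form required by \eqref{1.4} — in particular arranging the density condition $\wh\sigma_\ell \le \tfrac12$ on $A$ and the hitting-probability lower bound $\eta$ from the microscopic blocking event — is exactly the content we import wholesale from \cite{NitzSzni}, and the role of the thickening $\wt L_0(N)$ (which Remark \ref{rem2.5} 4) notes is necessary) is precisely to make this absorption robust. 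The choice of the cascade of scales $\wt L_0 \ll L_1 \ll \cdots \ll N$ and verifying all the $o(1)$ error terms vanish in the stated order is the bookkeeping that the proof will have to carry out carefully.
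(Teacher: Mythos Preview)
Your overall plan---coarse grain, extract a porous interface surrounding a macroscopic set $A$, invoke the capacity lower bound \eqref{1.4}, then Polya--Szeg\H{o} for \eqref{2.1} and the Fusco--Maggi--Pratelli coercivity for \eqref{2.2}---is the paper's strategy. But there is one concrete error that breaks the deterministic step: you place the thickening scale \emph{below} the coarse-graining scale, $\wt L_0 \ll L_1$, whereas the argument requires the opposite hierarchy. The paper uses two nested scales $L_0 \ll \wh L_0$, both dominated by $\wt L_0$ (see \eqref{2.3}--\eqref{2.5}). The reason is this: if an $L_0$-box $B_z$ lies in $\cU^1$ (connected to $B(0,2N)^c$ through a chain of good boxes), then by the local connectivity property $B_z$ contains a vertex of $\cC^u_N$; hence no such box can lie within distance $\wt L_0 - L_0$ of $\wt{\cW}^u_N$, and the density $\wh\sigma$ at scale $\wh L_0$ vanishes on the $(\wt L_0 - \wh L_0 - L_0)$-neighbourhood of $\wt{\cW}^u_N$ (this is \eqref{2.14}, \eqref{2.15}, the key input to Lemma~\ref{lem2.2}). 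With your ordering $\wt L_0 \ll L_1$, a ``good'' $L_1$-box containing a point $x \in \wt{\cW}^u_N$ could carry a crossing cluster of $\cC^u_N$ that stays more than $\wt L_0$ away from $x$, so you cannot conclude the box is bad, and the separation between $\wt{\cW}^u_N$ and the exterior via the interface never gets off the ground. The thickening is not there to be absorbed by the coarse graining; it is there to dominate it.

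Two smaller points. First, the number of coarse-grained configurations $\kappa$ is not polynomial in $N$ but $\exp\{o(N^{d-2})\}$ (see \eqref{2.29}); still negligible against the main rate, but ``polynomially many'' understates the bookkeeping. Second, your coercivity sketch only covers the asymmetry case. From $F^u_N \subseteq \mathring A_\kappa$ and $\delta(F^u_N,B_\nu)\ge\mu$ one obtains by the triangle inequality only $2(|\mathring A_\kappa|-\nu)+|\mathring A_\kappa|\,\lambda_{\mathring A_\kappa}\ge\mu$ (this is \eqref{2.35}--\eqref{2.37}), so Lemma~\ref{lem2.3} must split into a volume-excess case $|\mathring A_\kappa|-\nu\ge\mu/4$ (handled by monotonicity of ${\rm cap}(B_r)$ in $r$) and a genuine-asymmetry case $|\mathring A_\kappa|\,\lambda_{\mathring A_\kappa}\ge\mu/2$ (handled by \eqref{1.10}); your argument addresses only the latter.
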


\begin{proof}
The proof heavily relies on a coarse graining procedure (see also Figure 1 below) to construct a porous interface similar to Section 4 of \cite{NitzSzni}, as well as on the capacity lower bound proved in Corollary 3.4 of the same reference that we recalled in (\ref{1.4}).

We will first specify $\wt{L}_0(N)$ satisfying (\ref{0.3}) and introduce some scales. For this purpose we consider rationals $\alpha > \beta > \gamma$ in $(0,\ov{u})$, $\wt{\ve}$ rational in $(0,1)$, $K$ integer such that, as below (4.11) of \cite{NitzSzni}, $K \ge c(\alpha, \beta, \gamma, \wt{\ve}) (\ge 100)$, where this constant corresponds to $c_4(\alpha, \beta, \gamma) \vee c_5(\wt{\ve}) \vee c_8 (\alpha, \beta, \gamma)$ in the notation of Theorem 2.3, Proposition 3.1, and Theorem 5.1 of  \cite{Szni17}. For any such rationals $\alpha, \beta, \gamma, \wt{\ve}$ and integer $K$, we select a sequence $\gamma_N$ in $(0,1]$ satisfying the conditions (4.18) of \cite{NitzSzni}, in particular $\gamma_N \underset{N}{\longrightarrow} 0$ (this choice can actually be performed independently of $\wt{\ve}$, but this feature will be irrelevant for us). We then set, see (4.19) of \cite{NitzSzni},
\begin{equation}\label{2.3}
L_0 = [(\gamma^{-1}_N \,N \log N)^{\frac{1}{d-1}}], \; \wh{L}_0 = 100 d\, [\sqrt{\gamma}_N\, N],
\end{equation}
so that, cf.~(4.24) of \cite{NitzSzni}
\begin{equation}\label{2.4}
\wh{L}_0/ L_0 \underset{N}{\longrightarrow} \infty.
\end{equation}
We then choose $\wt{L}_0(N)$ to be any non-negative integer valued sequence such that
\begin{equation}\label{2.5}
\left\{  \begin{array}{rl}
{\rm i)} & \wt{L}_0(N) = o(N),
\\[1ex]
{\rm ii)} &\wt{L}_0(N) \, / \, \wh{L}_0(N) \underset{N}{\longrightarrow} \infty \;\mbox{for any given choice of $\alpha, \beta, \gamma, \wt{\ve}$ }
\\
& \mbox{and $K$ (there are countably many such choices)}.
\end{array}\right.
\end{equation}
One can always construct such an $\wt{L}_0$ via a diagonal procedure (incidentally, a more explicit rate of decay of the function $\rho(L)$ depending on $\alpha, \beta, \gamma, K$ in (4.16) of \cite{NitzSzni}, see also (5.6) of \cite{Szni17}, would yield a more explicit $\wt{L}_0$). 

We then introduce the lattices (cf.~(4.20) of \cite{NitzSzni}):
\begin{equation}\label{2.6}
\IL_0 = L_0 \,\IZ^d, \; \wh{\IL}_0 = \mbox{\no $\frac{1}{100d}$} \; \wh{L}_0 \, \IZ^d = [\sqrt{\gamma}_N \,N] \, \IZ^d
\end{equation}
(with $[\cdot]$ the integer part), and for $z \in \IL_0$ we define, see (4.12) of \cite{NitzSzni},
\begin{equation}\label{2.7}
\begin{split}
B_z = z + [0,L_0)^d \cap \IZ^d &\subseteq D_z = z + [-3L_0, 4L_0)^d \cap \IZ^d 
\\
&\subseteq U_z =  z+ [-K L + 1, K L - 1)^d \cap \IZ^d.
\end{split}
\end{equation}
We now consider some $u$ in $(0,\ov{u})$, pick $\alpha > \beta > \gamma$ rationals in $(u,\ov{u})$, and $\wt{\ve}$ rational in $(0,1)$ so that $\wt{\ve} \,(\sqrt{\frac{\ov{u}}{u}} - 1) < 1$, as well as an integer $K \ge c(\alpha, \beta, \gamma, \wt{\ve}) ( \ge 100)$. We denote by $N_u(D_z)$ the number of excursions from $D_z$ to the exterior boundary $\partial U_z$ of $U_z$ that are contained in the interlacement trajectories up to level $u$, see (2.14) and (1.42) of \cite{Szni17}. We refer to (2.11)~-~(2.13) of \cite{Szni17} for the notion of a good$(\alpha, \beta, \gamma)$-box $B_z$ (which is otherwise bad$(\alpha, \beta, \gamma)$). The details of the definition will not be important here. Very roughly, one looks at the excursions of the interlacements between $D_z$ and the complement of $U_z$ (they are ordered in a natural fashion). For a good$(\alpha, \beta, \gamma)$-box $B_z$, the complement of the first $\alpha \,{\rm cap}_{\IZ^d}(D_z)$ excursions (with ${\rm cap}_{\IZ^d}(\cdot)$ the simple random walk capacity) contains in $B_z$ a connected set with $|\cdot |_\infty$-diameter at least $L_0/10$, which is connected to similar components in neighboring boxes $B_z$ via paths in $D_z$ avoiding the first $\beta \,{\rm cap}_{\IZ^d}(D_z)$ excursions in the interlacement. In addition, the first $\beta \,{\rm cap}_{\IZ^d}(D_z)$ excursions spend a substantial ``local time'' on the inner boundary of $D_z$, which is at least $\gamma \, {\rm cap}_{\IZ^d}(D_z)$.

Informally, the variables $N_u(D_z)$ aim at tracking an ``undertow'' in the medium, whereas the notion of good$(\alpha,\beta,\gamma)$-box relates to a spatially faster decorrelating information, see also Remark 2.2 and above Theorem 3.2 of \cite{Szni15}. In Section 4, this splitting between ``undertow'' and ``local'' behavior will correspond to the decomposition of the Gaussian free field $\varphi$ into an harmonic average $h_{B_z}$ inside $U_z$ and the local field $\psi_{B_z}$ (vanishing outside $U_z$) so that $\varphi  = h_{B_z} + \psi_{B_z}$, see below (\ref{4.3}).

We can then introduce as in (4.27) of \cite{NitzSzni} (with the choice $M=1$ in (4.27) of \cite{NitzSzni})
\begin{equation}\label{2.8}
\begin{split}
\cU^1 = &\; \mbox{the union of all $L_0$-boxes $B_z$ that are either contained in}
\\[-0.5ex]
&\;\mbox{$B(0,2N)^c$ or linked to an $L_0$-box contained in $B(0,2N)^c$}
\\[-0.5ex]
&\;\mbox{by a path of $L_0$-boxes $B_{z_i}$, $0 \le i \le n$, which are all, except}
\\
&\mbox{maybe for the last one, good$(\alpha,\beta,\gamma)$ and such that}
\\
&\mbox{$N_u(D_{z_i}) < \beta \, {\rm cap}_{\IZ^d} (D_{z_i})$}.
\end{split}
\end{equation}
We write
\begin{equation}\label{2.9}
\cH^u_N = \{\,|\wt{\cW}^u_N| \ge \nu N^d\}
\end{equation}
for the event that $\wt{\cC}^u_N$ (see (\ref{0.4})), leaves a ``hole'' $\wt{\cW}^u_N = B(0,N) \backslash \wt{\cC}^u_N$ in $B(0,N)$ containing at least $\nu N^d$ sites. In the present context, this ``hole event'' replaces the disconnection event $\cD^u_N$ from (4.9) of \cite{NitzSzni}. With (\ref{2.2}) in mind, we also introduce the event
\begin{equation}\label{2.10}
\cH^{u,\mu}_N = \{ \,|\wt{\cW}^u_N| \ge \nu N^d, \delta(F^u_N, B_\nu) \ge \mu\} \subseteq \cH^u_N,
\end{equation}
where in addition the $\IR^d$-filling $F^u_N$ of $\frac{1}{N} \,\wt{W}^u_N$, see (\ref{0.9}), has an indicator function at $L^1$-distance at least $\mu$ from all translates of the indicator function of $B_\nu$, see (\ref{1.11}). Then, as in (4.28) of \cite{NitzSzni}, we introduce the function
\begin{equation}\label{2.11}
\wh{\sigma}(x) = | \cU^1 \cap B(x,\wh{L}_0) \, | \, / \, |B(x,\wh{L}_0)|, \,x \in \IZ^d,
\end{equation}
and note that $\wh{\sigma}(\cdot)$ has slow variation, in the sense that
\begin{equation}\label{2.12}
|\wh{\sigma}(x + e) - \wh{\sigma}(x)| \le \mbox{\no $\frac{1}{\wh{L}_0}$}, \;\mbox{for all $x, e$ in $\IZ^d$ with $|e|_1 = 1$}
\end{equation}
(where $| \cdot |_1$ stands for the $\ell^1$-norm).

First note that when $B(x,\wh{L}_0 + L_0) \subseteq B(0,2N)^c$, any $L_0$-box $B_z$ intersecting $B(x,\wh{L}_0)$ is contained in $B(0,2N)^c$ and hence in $\cU^1$, so, as in (4.30) of \cite{NitzSzni},
\begin{equation}\label{2.13}
\wh{\sigma}(x) = 1, \;\mbox{when} \;B(x,\wh{L}_0 + L_0) \subseteq B(0,2N)^c .
\end{equation}
The next observation differs from \cite{NitzSzni} (compare with (4.31) in \cite{NitzSzni}), and reflects that we are interested in the ``hole event'' $\cH^u_N$ from (\ref{2.9}), or in the event $\cH^{u,\mu}_N \subseteq \cH^u_N$ from (\ref{2.10}), in place of the disconnection event $\cD^u_N$ from (4.9) of \cite{NitzSzni}. We will now show that
\begin{equation}\label{2.14}
\begin{array}{l}
\mbox{for large $N$, on $\cH^u_N$, $\wh{\sigma}(x) = 0$ when $x$ belongs to the }
\\
\mbox{$(\wt{L}_0 - \wh{L}_0 - L_0)$-neighborhood in $|\cdot |_\infty$-distance of $\wt{\cW}^u_N$}.
\end{array}
\end{equation}
For this purpose we will first establish that
\begin{equation}\label{2.15}
\mbox{for large $N$, the $(\wt{L}_0 - L_0)$-neighborhood of $\wt{\cW}^u_N$ does not meet $\cU^1$}.
\end{equation}
Indeed, observe first that $\wt{\cW}^u_N \subseteq B(0,N- \wt{L}_0 - 1)$ (because $\wt{\cC}^u_N$ contains the $\wt{L}_0$-neighborhood of $S_N \subseteq \cC^u_N$). If the $(\wt{L}_0 - L_0)$-neighborhood of $\wt{\cW}^u_N$ meets $\cU^1$, then there is an $L_0$-box $B_z$ in $\cU^1$, which meets the $(\wt{L}_0 - L_0)$-neighborhood of $\wt{\cW}^u_N$. Hence $B_z$ is contained in $B(0,N-\wt{L}_0 - 1 + \wt{L}_0 - L_0 + L_0) = B(0,N-1)$ and in $\cU^1$. By the connectivity statement in (4.13) of \cite{NitzSzni} (or Lemma 6.1 in \cite{Szni17}), $B_z$ contains a vertex $y$ that belongs to a connected component of $\cV^u$, which meets an $L_0$-box having a neighboring box contained in $B(0,2N)^c$. In particular, this connected component meets $S_N$, and the vertex $y$ belongs to $\cC^u_N$. In addition, since $B_z$ has $| \cdot |_\infty$-diameter $L_0 - 1$, and $B_z$ meets the $(\wt{L}_0 - L_0)$-neighborhood of $\wt{\cW}^u_N$, the vertex $y$ lies at distance $\wt{L}_0 - 1$ of $\wt{\cW}^u_N$, but this is a contradiction since $y \in \cC^u_N$. This proves (\ref{2.15}).

Let us now prove (\ref{2.14}). To this end we note that for large $N$, on $\cH^u_N$, when $x$ belongs to the $(\wt{L}_0 - \wh{L}_0 - L_0)$-neighborhood of $\wt{\cW}^u_N$, then $B(x,\wh{L}_0)$ is contained in the $(\wt{L}_0 - L_0)$-neighborhood of $\wt{\cW}^u_N$, and by (\ref{2.15}) it does not meet $\cU^1$, so that $\wh{\sigma}(x) = 0$. This proves (\ref{2.14}).

We now proceed in a similar fashion to (4.32) in \cite{NitzSzni}, and define
\begin{equation}\label{2.16}
\wh{\cS}_N = \big\{x \in \wh{\IL}_0; \; \mbox{\no $\frac{1}{4}$} \le \wh{\sigma}(x) \le \mbox{\no $\frac{3}{4}$}\big\},
\end{equation}
as well as the compact subset of $\IR^d$
\begin{equation}\label{2.17}
\Delta_N = \mbox{\no $\bigcup\limits_{x \in \wh{\cS}_N}$} B_{\IR^d} \big(\mbox{\no $\frac{x}{N}$}, \mbox{\no $\frac{1}{50d}$} \; \mbox{\no $\frac{\wh{L}_0}{N}$}\big),
\end{equation}
where for $z$ in $\IR^d$ and $r \ge 0$, $B_{\IR^d}(z,r)$ stands for the closed ball in $\IR^d$ with center $z$ and radius $r$ for the $| \cdot |_\infty$-distance.

In the present context, the following lemma replaces Lemma 4.3 of \cite{NitzSzni}.

\begin{lem}\label{lem2.2} (insulation property of $\Delta_N$)

For large $N$,
\begin{equation}\label{2.18}
\wh{\cS}_N \subseteq B(0,3N) \cap \wh{\IL}_0,
\end{equation}
and on $\cH^u_N$,
\begin{align}
&\mbox{the closed $\frac{\wt{L}_0}{2 N}$-neighborhood in $\IR^d$ of $\frac{1}{N} \,\wt{\cW}^u_N$ for the}\label{2.19}
\\
&\mbox{$| \cdot |_\infty$-distance is contained in the union of the bounded} \nonumber
\\
&\mbox{components of the open set $\IR^d \backslash \Delta_N$}. \nonumber
\end{align}
\end{lem}

\begin{proof}
The proof of (\ref{2.18}) is the same as the proof of (4.34) of \cite{NitzSzni}: by (\ref{2.13}) we know that $\wh{\sigma}(x) = 1$, when $B(x,\wh{L}_0 + L_0) \subseteq B(0,2 N)^c$. Hence, for large $N$, when $|x|_\infty \ge 3N$, one has $B(x,\wh{L}_0 + L_0) \subseteq B(0,2N)^c$, so that $\wh{\sigma}(x) = 1$. The claim (\ref{2.18}) follows.

We now turn to the proof of (\ref{2.19}). We will use the fact that
\begin{align}
&\mbox{for large $N$, on $\cH^u_N$, any continuous $\Psi$: $[0,1] \r \IR^d$ such that $\Psi(0)$}\label{2.20}
\\[-1ex]
&\mbox{is within $| \cdot |_\infty$-distance $1$ of $\{x \in \IZ^d$; $d_\infty(x,\wt{\cW}^u_N) \le \frac{\wt{L}_0}{2}\}$ and such} \nonumber
\\[-0.6ex]
&\mbox{that $|\Psi(1)|_\infty \ge 3N$, comes within $|\cdot |_\infty$-distance $\frac{1}{50d} \;\wh{L}_0$ from $\wh{\cS}_N$}. \nonumber
\end{align}
The proof is very similar to that of (4.37) of \cite{NitzSzni}. Indeed, given $\Psi$ as above, one constructs a $\IZ^d$-valued $*$-path $y_i$, $0 \le i \le \ell$ (i.e. $|y_{i+1} - y_i|_\infty = 1$, for $0 \le i < \ell)$, such that $d_\infty(y_0, \wt{\cW}^u_N) \le \frac{\wt{L}_0}{2}$ and $|y_\ell |_\infty > 2N + \wh{L}_0 + L_0$, and $y_0,y_1,\dots,y_\ell$ are contained in the closed $1$-neighborhood for the $| \cdot |_\infty$-distance of $\Psi ([0,1])$. By (\ref{2.14}) and (\ref{2.13}), we see that $\wh{\sigma}(y_0) = 0$ and $\wh{\sigma}(y_\ell) = 1$. Note that $|y_{i+1} - y_i|_1 \le d$, for $0 \le i < \ell$. So, by the Lipschitz property (\ref{2.12}) of $\wh{\sigma}$, we can find some $0 \le j \le \ell$ with $|\wh{\sigma}(y_j) - \frac{1}{2}| \le \frac{d}{\wh{L}_0}$. Then, for large $N$, if we choose $\wh{y} \in \wh{\IL}_0$ with $| \wh{y} -y_j|_\infty \le \frac{1}{100d} \;\wh{L}_0$, we find that
\begin{equation}\label{2.21}
\big|\wh{\sigma}(\wh{y}) - \mbox{\no $\frac{1}{2}$} \big| \le  \mbox{\no $\frac{1}{\wh{L}_0}$} \;|\wh{y} - y_j|_1 +   \mbox{\no $\frac{d}{\wh{L}_0}$} \le   \mbox{\no $\frac{1}{100}$} +  \mbox{\no $\frac{d}{\wh{L}_0}$} <  \mbox{\no $\frac{1}{4}$} .
\end{equation}
This proves that $\wh{y} \in \wh{\cS}_N$, cf.~(\ref{2.16}). Observe that $\Psi(\cdot)$ comes within $| \cdot |_\infty$-distance $\frac{1}{100d} \;\wh{L}_0 + 1 \le \frac{1}{50d} \;\wh{L}_0$ from $\wh{y}$. This proves (\ref{2.20}).

Let us conclude the proof of (\ref{2.19}). To this end, note that any point in $\IR^d$ in the $\frac{\wh{L}_0}{2N}$-closed neighborhood of $\frac{1}{N} \,\wt{\cW}^u_N$ for the $| \cdot |_\infty$-distance, is within $| \cdot |_\infty$-distance $\frac{1}{N}$ of some point in $\frac{1}{N} \,\IZ^d$ in the $\frac{\wt{L}_0}{2N}$-closed neighborhood of $\frac{1}{N} \, \wt{\cW}^u_N$ for the $| \cdot |_\infty$-distance. So, by (\ref{2.20}), any continuous path in $\IR^d$ starting in the $\frac{\wt{L}_0}{2N}$-closed neighborhood of $\frac{1}{N} \, \wt{\cW}^u_N$ for the $| \cdot |_\infty$-distance and tending to infinity necessarily encounters $\Delta_N$. The claim (\ref{2.19}) follows. This concludes the proof of Lemma \ref{lem2.2}.
\end{proof}

We then proceed as in (4.39) of \cite{NitzSzni}. We go over the main steps. We extract a random subset $\wt{\cS}_N$ of $\wh{\cS}_N$ such that
\begin{equation}\label{2.22}
\begin{array}{l}
\mbox{$\wt{\cS}_N$ is a maximal subset of $\wh{\cS}_N$ such that the $B(x,2 \wh{L}_0), \,x \in \wt{\cS}_N$,}
\\
\mbox{are pairwise disjoint.}
\end{array}
\end{equation}
We introduce the notation $N_{L_0} = L_0^{d-1} / \log L_0$ and the bad event, see (4.22) of \cite{NitzSzni} (with $(e_i)_{1 \le i \le d}$ the canonical basis of $\IR^d$)
\begin{equation}\label{2.23}
\begin{split}
\cB_N = \mbox{\normalsize $\bigcup\limits_{e\in \{e_1,\dots, e_d\}}$} \;\; \big\{ &\mbox{there are at least $\rho(L_0) ( \mbox{\no $\frac{N_{L_0}}{L_0}$})^{d-1}$ columns of}
\\[-2ex]
&  \mbox{$L_0$-boxes in the direction $e$ in $B(0,20 N)$ that}
\\
& \mbox{contain a bad$(\alpha, \beta, \gamma)$ $L_0$-box$\big\}$}
\end{split}
\end{equation}
(with $\rho(L)$ a suitable function depending on $\alpha, \beta, \gamma$, and $K$, which tends to $0$ as $L \r \infty$).

By Lemma 4.2 of \cite{NitzSzni}, we know that
\begin{equation}\label{2.24}
\rho(L_0) \;\Big( \mbox{\no $\frac{N_{L_0}}{L_0}$}\Big)^{d-2} / \;(\wh{L}_0 /  L_0)^{d-1} \underset{N}{\longrightarrow} 0,
\end{equation}
and that the following super-exponential bound holds
\begin{equation}\label{2.25}
\lim\limits_N \;  \mbox{\no $\frac{1}{N^{d-2}}$} \;\log \IP[\cB_N] = - \infty.
\end{equation}
So the bad even $\cB_N$ in (\ref{2.23}) is ``negligible'' for our purpose. We thus introduce the effective events
\begin{equation}\label{2.26}
\wt{\cH}^u_N = \cH^u \backslash \cB_N \supseteq \wt{\cH}^{u,\mu}_N = \cH^{u,\mu}_N \backslash  \cB_N.
\end{equation}
As in (4.41) of \cite{NitzSzni}, setting $\ov{K} = 2 K + 3$, we find that
\begin{equation}\label{2.27}
\begin{array}{l}
\mbox{for large $N$, on $\wt{\cH}_N^u$, for each $x \in \wt{\cS}_N$ there is a collection $\wt{\cC}_x$}
\\
\mbox{of $L_0$-boxes intersecting $B(x,\wh{L}_0)$ with $\wt{\pi}_x$-projection at mutual}
\\
\mbox{distance at least $\ov{K} L_0$ and cardinality $[(\frac{c'}{K} \; \frac{\wh{L}_0}{L_0})^{d-1}]$ such that for}
\\
\mbox{each $z \in \wt{\cC}_x$, $B_z$ is good$(\alpha, \beta, \gamma)$ and $N_u(D_z) \ge \beta \, {\rm cap}_{\IZ^d}(D_z)$}
\end{array}
\end{equation}
(for each $x \in \wt{\cS}_N$, one has $\wt{i}_x$ in $\{1,\dots,d\}$ and  $\wt{\pi}_x$ denotes the projection on the set of points in $\IZ^d$ with vanishing $\wt{i}_x$-coordinate).

As below (4.41) of \cite{NitzSzni}, for large $N$, we can define a random variable on $\wt{\cH}^u_N$
\begin{equation}\label{2.28}
\kappa_N = (\wh{\cS}_N,\wt{\cS}_N, (\wt{\pi}_x, \wt{\cC}_x)_{x \in \wt{\cS}_N}),
\end{equation}
with range $\cK_N$, which is a set such that, cf.~(4.43) of \cite{NitzSzni},
\begin{equation}\label{2.29}
|\cK_N| = \exp\{ o(N^{d-2})\}.
\end{equation}
This provides for large $N$ a coarse graining of the event $\wt{\cH}^u_N$ in (\ref{2.26}) (see Figure 1). Namely, for large $N$ one has the partition
\begin{equation}\label{2.30}
\wt{\cH}^u_N =  \mbox{\no $\bigcup\limits_{\kappa \in \cK_N}$} \;\cH_{N,\kappa} \;\;\mbox{where} \;\; \cH_{N,\kappa} = \wt{\cH}^u_N \cap \{\kappa_N = \kappa\}.
\end{equation}
As in (4.47)--(4.49) of \cite{NitzSzni}, for each $\kappa \in \cK_N$, we associate a ``segmentation'' corresponding to $U_0$ or $S$ in (\ref{2.31}) below, and a porous interface corresponding to $\Sigma$ in (\ref{2.31}) below, via:
\begin{equation}\label{2.31}
\left\{ \begin{array}{rl}
C = &\!\!\! \bigcup\limits_{x \in \wt{\cS}} \; \bigcup\limits_{z \in \wt{\cC}_x} \,B_z (\subseteq \IZ^d),
\\
\\[-2ex]
\Sigma = &\!\!\! \frac{1}{N} \;\big(\bigcup\limits_{x \in \wt{\cS}} \; \bigcup\limits_{z \in \wt{\cC}_x} z + [0,L_0]^d\big) (\subseteq \IR^d),
\\
\\[-2ex]
U_1 = &\!\!\!\mbox{the unbounded component of}
\\
&\!\!\!\mbox{$\IR^d  \backslash  \big\{ \frac{1}{N} \big(\bigcup\limits_{x \in \wh{\cS}} B_{\IR^d}\big(x, \frac{1}{50d} \;\wh{L}_0\big)\big)\big\}$},
\\
\\[-2ex]
U_0 = &\!\!\! \IR^d \backslash U_1, \,S = \partial U_0 = \partial U_1.
\end{array}\right.
\end{equation}

\psfrag{AN}{$A_N$}
\psfrag{SN}{$S_N$}
\psfrag{WN}{$\wt{\cW}^u_N$}
\psfrag{dist}{in $\cU^1$}
\psfrag{L}{$| \cdot |_\infty$-distance at least $\wt{L}_0 / 2 \gg \wh{L}_0  \gg L_0$}
\psfrag{L0}{\small{$2\widehat{L}_0$}}
\psfrag{Lsk}{\small{$L_0 \gg (N \log N)^{\frac{1}{d-1}}$ ``slightly''}}
\includegraphics[width=8cm]{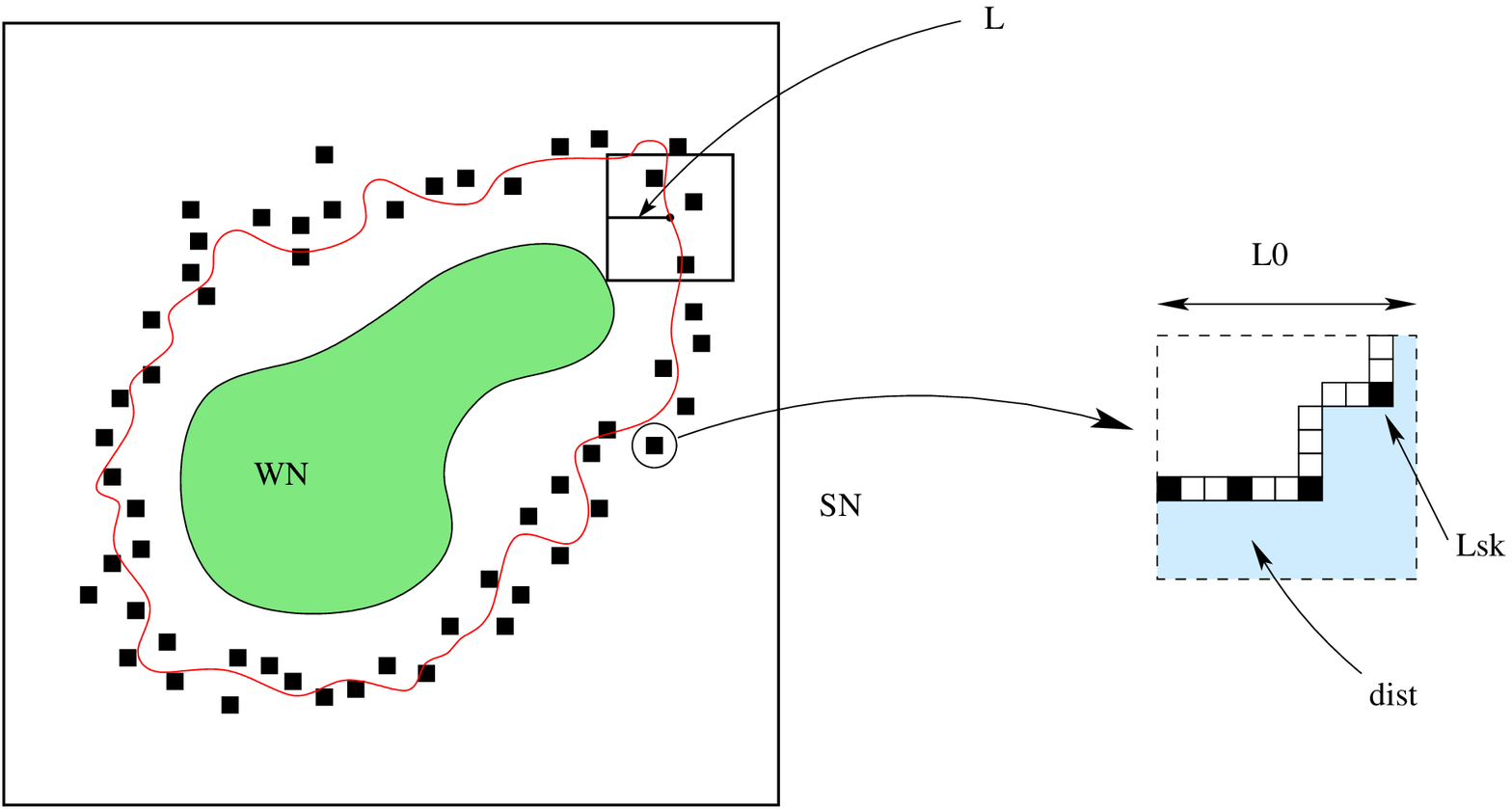}

\medskip
\begin{center}
\begin{tabular}{lp{10cm}}
Fig.~1: & An informal illustration of some features entering the definition of the event $\cH_{N,\kappa}$ corresponding to the coarse graining of the event $\wt{\cH}^u_N$ in (\ref{2.26}), with the selected boxes of side-length $2\wh{L}_0$ on the left-hand side, and the blow-up of one such box with the selected boxes of side-length $L_0$ (in black), on the right-hand side.
 \end{tabular}
\end{center}
We note that on $\cH_{N,\kappa}$ the above defined $U_1$ coincides with the unbounded component of $\IR^d \backslash \Delta_N$ in Lemma \ref{lem2.2}. Thus by (\ref{2.19}) 
\begin{equation}\label{2.32}
\left\{ \begin{array}{l}
\mbox{for large $N$ and any $\kappa \in \cK_N$, on $\cH_{N, \kappa}$, the $\frac{\wt{L}_0}{2N}$-closed}
\\
\mbox{neighborhood of $\frac{1}{N} \; \wt{\cW}^u_N$ for the $| \cdot |_\infty$-distance is contained}
\\
\mbox{in the union of the bounded components of the open set}
\\
\mbox{$\IR^d \backslash \big\{\frac{1}{N} \,\big(\bigcup\limits_{x \in \wh{\cS}} B_{\IR^d}\big(x , \frac{1}{50d}\;\wh{L}_0\big)\big)\big\}$}.
\end{array}\right.
\end{equation}
The proof now differs from \cite{NitzSzni} (see below (4.49) of that reference). We can now introduce for large $N$ the (deterministic) compact subsets of $\IR^d$
\begin{equation}\label{2.33}
A_\kappa = \big\{z \in U_0; \,d_\infty(z,U_1) \ge  \mbox{\no $\frac{\wt{L}_0}{4N}$}\big\}, \; \mbox{for $\kappa \in \cK_N$}.
\end{equation}
Recall the notation $F^u_N$ from (\ref{0.9}), for the $\IR^d$-filling of $\frac{1}{N} \;\wt{\cW}^u_N$. By (\ref{2.32}) and the definition of the hole event $\cH^u_N$ in (\ref{2.9}), we see that
\begin{equation}\label{2.34}
\begin{array}{l}
\mbox{for large $N$ and any $\kappa \in \cK_N$, on $\cH_{N, \kappa}$, one has $F^u_N \subseteq \mathring{A}_\kappa$ and}
\\
\nu \le |F^u_N| \le |\mathring{A}_\kappa | \;\mbox{(with $\mathring{A}_\kappa$ the interior of $A_\kappa$ and $|E|$ the volume}
\\
\mbox{of $E$ Borel subset of $\IR^d$}).
\end{array}
\end{equation}
In view of the definition of the Fraenkel asymmetry, see (\ref{1.9}), we then find by the triangle inequality that
\begin{equation}\label{2.35}
\begin{array}{l}
\mbox{for large $N$ and any $\kappa \in \cK_N$, on $\cH_{N, \kappa}$, $|\mathring{A}_\kappa \backslash F^u_N| = | \mathring{A}_\kappa | - |F^u_N|$,}
\\
\mbox{and} \; |\mathring{A}_\kappa | - |F^u_N| + |\mathring{A}_\kappa | - \nu + |\mathring{A}_\kappa | \,\lambda_{\mathring{A}_\kappa} \ge \delta (F^u_N,B_\nu).
\end{array}
\end{equation}
If we now set in the notation of (\ref{2.26}), for large $N$,
\begin{equation}\label{2.36}
\cK^\mu_N = \{\kappa\in \cK_N; \,\cH_{N,\kappa} \cap \cH^{u,\mu}_N \not= \phi\}.
\end{equation}
We now find by (\ref{2.35}) and (\ref{2.34}) that
\begin{equation}\label{2.37}
\begin{array}{l}
\mbox{for large $N$ and any $\kappa \in \cK^\mu_N$,}
\\
\mbox{$|\mathring{A}_\kappa | \ge \nu$ and $2 (|\mathring{A}_\kappa | - \nu) + |\mathring{A}_\kappa | \,\lambda_{\mathring{A}_\kappa} \ge \mu$}.
\end{array}
\end{equation}
The following lemma will be helpful in the proof of (\ref{2.2}) (and similarly in the proof of (\ref{4.2}) in Theorem \ref{theo4.1} of Section 4).

\begin{lem}\label{lem2.3} 
(coercivity of the capacity, see (\ref{1.9}) and below (\ref{0.8}) for notation)

Consider $\nu > 0$, $\mu > 0$ and $U$ a bounded open set in $\IR^d$ with
\begin{equation}\label{2.38}
|U| \ge \nu \;\;\mbox{and} \;\; 2(|U| - \nu) + |U| \,\lambda_U \ge \mu.
\end{equation}
Then, one has
\begin{equation}\label{2.39}
{\rm cap}(U) \ge {\rm cap}(B_\nu) + c_1(\nu,\mu).
\end{equation}
\end{lem}

\begin{proof}
If (\ref{2.38}) holds, then either
\begin{align}
&|U| - \nu \ge  \mbox{\no $\frac{\mu}{4}$} , \;\; \mbox{or} \label{2.40}
\\[1ex]
& |U| - \nu <  \mbox{\no $\frac{\mu}{4}$} \;\; \mbox{and}  \;\; |U|\, \lambda_U \ge  \mbox{\no $\frac{\mu}{2}$}. \label{2.41}
\end{align}

If (\ref{2.40}) holds, then $\eta_U \ge 0$, see (\ref{1.8}) for notation, and one has
\begin{equation}\label{2.42}
{\rm cap}(U) \ge {\rm cap}(B_{\nu  + \frac{\mu}{4}}) \ge {\rm cap}(B_\nu) + c(\nu,\mu).
\end{equation}
On the other hand, if (\ref{2.41}) holds, then 
\begin{equation}\label{2.43}
 \mbox{\no $\frac{\mu}{2}$} \stackrel{\mbox{\f (\ref{2.41})}}{\le} |U| \,\lambda_U \stackrel{\mbox{\f (\ref{2.41})}}{\le}  \big( \nu +  \mbox{\no $\frac{\mu}{4}$}\big)\,\lambda_U \stackrel{\mbox{\f (\ref{1.10})}}{\le} c_0\,\big(\nu +  \mbox{\no $\frac{\mu}{4}$}\big) \; \eta^{1/4}_U,
\end{equation}
so that
\begin{equation}\label{2.44}
\eta_U \ge \wt{c}(\nu,\mu).
\end{equation}
We then find that by definition of $\eta_U$ in (\ref{1.8}), that with $B$ a ball of volume $|U|$
\begin{equation}\label{2.45}
\begin{array}{l}
{\rm cap}(U) \ge \big(1 + \wt{c}(\nu,\mu)\big) \,{\rm cap}(B) \stackrel{\mbox{\f (\ref{2.38})}}{\ge} 
\\
\big(1 + \wt{c}(\nu,\mu)\big) \,{\rm cap}(B_\nu) \ge {\rm cap}(B_\nu) + c'(\nu,\mu).
\end{array}
\end{equation}
Collecting (\ref{2.42}) and (\ref{2.45}), we find (\ref{2.39}). This proves Lemma \ref{lem2.3}.
\end{proof}

We now resume the proof of (\ref{2.1}) and (\ref{2.2}). We begin with (\ref{2.1}). We proceed in a similar fashion to (4.52) of \cite{NitzSzni} and find with the exponential bound stated in (4.14) of \cite{NitzSzni} that in the notation of (\ref{2.31})
\begin{equation}\label{2.46}
\begin{array}{l}
\limsup\limits_N \; \frac{1}{N^{d-2}} \;\log \IP[\cH^u_N] \le - \Gamma \; \liminf_N \; \inf\limits_{\kappa \in \cK_N} \; \frac{1}{N^{d-2}} \;{\rm cap}_{\IZ^d}(C), 
\\[1ex]
\mbox{where}\; \Gamma = \big(\sqrt{\gamma} - \frac{\sqrt{u}}{1 - \wt{\ve}\,(\sqrt{\frac{\ov{u}}{u}} - 1)}\big) (\sqrt{\gamma} - \sqrt{u}).
\end{array}
\end{equation}
Then, taking a $\liminf$ over $K$ tending to infinity, we find with help of Proposition A.1 of the Appendix of \cite{NitzSzni} that (see (\ref{2.31}) for notation)
\begin{equation}\label{2.47}
\limsup\limits_N \; \mbox{\no $\frac{1}{N^{d-2}}$} \; \log \IP[\cH^u_N] \le - \Gamma \;\underset{K}{\underline{\lim}} \;\; \underset{N}{\underline{\lim}} \; \inf\limits_{\kappa \in \cK_N} \, \mbox{\no $\frac{1}{d}$} \;{\rm cap}(\Sigma).
\end{equation}
Now, by the capacity lower bound of Corollary 3.4 of \cite{NitzSzni} recalled in (\ref{1.4}) (where we choose $A = A_\kappa$, $\ve = 10 \,\frac{\wh{L}_0}{N}$, $\ell_*$ the smallest non-negative integer so that $2^{-\ell_*} \le \frac{\wt{L}_0}{10 N}$), we find
\begin{equation}\label{2.48}
\begin{split}
\limsup\limits_N \; \mbox{\no $\frac{1}{N^{d-2}}$} \; \log \IP[\cH^u_N]  &\le - \Gamma \;  \underset{K}{\underline{\lim}} \;\; \underset{N}{\underline{\lim}} \; \inf\limits_{\kappa \in \cK_N} \; \mbox{\no $\frac{1}{d}$} \;{\rm cap}(A_\kappa)
\\
& \le - \Gamma \; \mbox{\no $\frac{1}{d}$}  \;{\rm cap}(B_\nu),
\end{split}
\end{equation}
where we used (\ref{2.34}) and Polya-Szeg\"o's Inequality (i.e.~$\eta_U \ge 0)$ to find that, for large $N$ and any $\kappa \in \cK_N$, ${\rm cap}(A_\kappa) \ge {\rm cap}(\mathring{A}_\kappa) \ge {\rm cap}(B_\nu)$. We can then let $\wt{\ve}$ go to $0$ and $\alpha, \beta, \gamma$ go to $\ov{u}$ along rationals (these parameters enter $\Gamma$) to find our claim (\ref{2.1}).

We now turn to the proof of (\ref{2.2}). The same argument now yields 
\begin{equation}\label{2.49}
\limsup\limits_N \; \mbox{\no $\frac{1}{N^{d-2}}$} \; \log \IP[\cH^{u,\mu}_N]  \le - \Gamma \;  \underset{K}{\underline{\lim}} \;\; \underset{N}{\underline{\lim}} \; \inf\limits_{\kappa \in \cK^\mu_N} \; \mbox{\no $\frac{1}{d}$} \;{\rm cap}(A_\kappa),
\end{equation}
where $\cK^\mu_N$ is defined in (\ref{2.36}).

We can then apply (\ref{2.37}) and Lemma \ref{2.3} to find that for large $N$ and any $\kappa \in \cK^\mu_N$, one has ${\rm cap}(A_\kappa) \ge {\rm cap}(\mathring{A}_\kappa) \ge {\rm cap}(B_\nu) + c_1 (\nu, \mu)$. Inserting this information in (\ref{2.49}) now yields
\begin{equation}\label{2.50}
\limsup\limits_N \; \mbox{\no $\frac{1}{N^{d-2}}$} \; \log \IP[\cH^{u,\mu}_N]  \le - \Gamma  \; \big(\mbox{\no $\frac{1}{d}$} \;{\rm cap}(B_\nu) + c_1(\nu,\mu)\big).
\end{equation}
Letting $\wt{\ve}$ tend to $0$ and $\alpha,\beta,\gamma$ tend to $\ov{u}$ along rationals concludes the proof of (\ref{2.2}), and hence of Theorem \ref{theo2.1}.
\end{proof}

We now turn to the asymptotic lower bound. The restriction $\nu < \omega_d$ permits us to avoid boundary effects in the minimization problem, where we look for a set of minimal capacity among all compact subsets $A$ of $\mathring{B}(0,1)$ with volume $\nu$: we have a minimum for the choice $A = B_\nu \subset \mathring{B}(0,1)$. Our main result is

\begin{thm}\label{theo2.4} (lower bound)

For any $0 < u < u_{**}$, $0 < \nu < \omega_d$, and $\wt{L}_0(N)$ satisfying (\ref{0.3}), we have
\begin{equation}\label{2.51}
\liminf\limits_N \;\;\mbox{\no $\frac{1}{N^{d-2}}$} \;\log \IP[\,|\wt{\cW}^u_N | \ge \nu N^d] \ge - \mbox{\no $\frac{1}{d}$} \;(\sqrt{u}_{**} - \sqrt{u})^2 \;{\rm cap}(B_\nu)
\end{equation}
(see (\ref{0.6}) and below (\ref{0.8}) for notation).
\end{thm}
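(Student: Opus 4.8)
The strategy is the standard change-of-probability (tilting) method built on the relative entropy inequality (\ref{1.6}), following the template of \cite{LiSzni14}. First I would fix $0<u<u_{**}$ and $0<\nu<\omega_d$, and choose a smooth compactly supported ``bump'' function in $\IR^d$ that creates the desired hole: more precisely, pick a closed Euclidean ball $\bar B_\nu$ of volume $\nu$ sitting strictly inside $(-1,1)^d$ (possible since $\nu<\omega_d$), and let $\chi\colon \IR^d\to[0,1]$ be smooth with $\chi\equiv 1$ on a neighborhood of $\bar B_\nu$ and $\chi$ supported in $(-1,1)^d$. The tilted measure $\wt{\IP}$ will be the law of random interlacements at level $u$ to which we superimpose an independent interlacement-type perturbation that raises the local intensity from $u$ to roughly $u_{**}$ inside the ball; concretely, following \cite{LiSzni14}, one uses a measure under which the interlacement point process is that of an inhomogeneous intensity $u + (\sqrt{u_{**}}-\sqrt{u})^2\,\chi(\cdot/N)^2$-type profile, engineered so that inside the scaled ball $N\bar B_\nu$ the vacant set is governed by a level at least $u_{**}-\delta$. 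The key point is that the relative entropy of $\wt\IP$ with respect to $\IP$ is, to leading order,
\[
H(\wt\IP\mid\IP)\ \sim\ \tfrac1d\,(\sqrt{u_{**}}-\sqrt u)^2\,{\rm cap}(B_\nu)\,N^{d-2},
\]
which is exactly the exponent we want; this computation is the analogue of the energy/capacity variational identity from \cite{LiSzni14} and uses that the Dirichlet-type energy of the optimal profile equals the Brownian capacity of the ball (with the factor $1/d$ coming from the normalization relating the discrete Green function to the Brownian one).

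The second ingredient is to show that under the tilted law the event $\{|\wt\cW^u_N|\ge\nu N^d\}$ is not exponentially unlikely — ideally $\wt\IP[|\wt\cW^u_N|\ge\nu N^d]\to 1$, or at least stays bounded below by $e^{o(N^{d-2})}$. Here one uses that under $\wt\IP$ the effective level inside $N\bar B_\nu$ exceeds $u_{**}$ (strictly, after letting the approximation parameter $\delta\to 0$ at the end), so by the exponential decay of vacant-set connectivity above $u_{**}$ (Theorem 3.1 of \cite{PopoTeix15}, as invoked for (\ref{0.7})) the component $\cC^u_N$ of $S_N$ cannot reach into the central region; hence with overwhelming $\wt\IP$-probability the ball $N\bar B_\nu$ (minus its $\wt L_0$-collar, which is negligible since $\wt L_0(N)=o(N)$) lies in $\wt\cW^u_N$, giving volume at least $\nu N^d(1-o(1))$. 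One then plugs into (\ref{1.6}) with $A=\{|\wt\cW^u_N|\ge\nu N^d\}$:
\[
\IP[A]\ \ge\ \wt\IP[A]\,\exp\!\Big\{-\tfrac{1}{\wt\IP[A]}\big(H(\wt\IP\mid\IP)+\tfrac1e\big)\Big\},
\]
take $\frac1{N^{d-2}}\log$, use $\wt\IP[A]\to 1$, and obtain $\liminf \frac1{N^{d-2}}\log\IP[A]\ge -\tfrac1d(\sqrt{u_{**}}-\sqrt u)^2\,{\rm cap}(B_\nu)$; finally let $\delta\to 0$ and optimize (the ball $\bar B_\nu\subset(-1,1)^d$ already realizes the minimal capacity among volume-$\nu$ compacts in the open cube, as noted before the statement).

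The main obstacle I expect is the precise control of the relative entropy and, coupled to it, the verification that the tilted perturbation genuinely forces the central region to be in the strongly non-percolative regime \emph{without inflating the entropy beyond the claimed value}. One must choose the perturbation profile to be exactly (an approximation of) the capacitory equilibrium potential of the ball — any other shape gives a strictly larger Dirichlet energy and hence a worse constant — and one must handle the fact that we need the level to exceed $u_{**}$ (not merely $u_*$ or $\bar u$) inside the hole, which is what pins the exponent to $u_{**}$ and is the source of the (conjectural) gap with the upper bound (\ref{2.1}). Controlling boundary/discretization errors near $S_N$ and near $\partial(N\bar B_\nu)$, and checking that the $\wt L_0(N)=o(N)$ thickening of $\cC^u_N$ does not eat into the hole, are routine once the profile is chosen with support strictly inside $(-1,1)^d$. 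I would cite \cite{LiSzni14} for the entropy computation and the disconnection-lower-bound scheme, adapting the event from ``disconnection of $B(0,N)$ from infinity'' to ``macroscopic vacant hole,'' which is if anything easier since we only need a positive-volume region rather than a full disconnecting shell.
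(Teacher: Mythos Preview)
Your plan is correct and mirrors the paper's proof: tilted interlacements from \cite{LiSzni14} with a smoothed equilibrium-potential profile, the change-of-probability inequality (\ref{1.6}), and the disconnection-under-tilt statement yielding $\wt\IP_N[\cA^u_N]\to 1$. Two small corrections to keep in mind when you write it out. First, the profile must be supported in a large ball $B_2(0,r)$ with $r\to\infty$ taken at the end (as the paper does), not in $(-1,1)^d$: a profile with support confined to the cube has Dirichlet energy at least the capacity of $B_\nu$ \emph{relative} to $(-1,1)^d$, which is strictly larger than ${\rm cap}(B_\nu)$, and would give a suboptimal constant in the entropy. Second, the fact that $\cC^u_N$ stays out of the central region under $\wt\IP_N$ does not follow directly from \cite{PopoTeix15}, since the tilted interlacement is spatially inhomogeneous and is not the law of $\cI^{u'}$ for any fixed $u'$; the paper instead quotes (4.10) of \cite{LiSzni14}, which proves exactly this statement for tilted interlacements via a local coupling with standard interlacements at level $u_{**}+\ve$.
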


\begin{proof}
We will in essence apply the construction and the results of \cite{LiSzni14}, and introduce a probability $\wt{\IP}_N$ corresponding to so-called {\it tilted interlacements}, under which, with probability tending to $1$ as $N$ goes to infinity, $\wt{\cC}^u_N$ in (\ref{0.4}) does not meet the Euclidean ball centered at the origin and volume $\nu' \,N^d$ (with $\nu'$ slightly larger than $\nu$). Then Theorem \ref{theo2.4} will follow with the change of probability method (i.e.~the inequality (\ref{1.6})) and the relative entropy bounds from Proposition 2.3 and 2.4 of \cite{LiSzni14}.

We thus consider $0 < u < u_{**}$ and $0 < \nu < \omega_d$ and define $R_\nu \in (0,1)$ via 
\begin{equation}\label{2.52}
\nu = \omega_d \;R^d_\nu
\end{equation}
(i.e.~the Euclidean ball $B_\nu$ from (\ref{0.8}) has radius $R_\nu$). We choose $\delta \in (0,1)$ such that
\begin{equation}\label{2.53}
R_\nu + 2 \delta < 1,
\end{equation}
as well as $\ve \in (0,1)$ and $r > 1$. We consider the function $h(z)$ on $\IR^d$ solution of the equilibrium problem
\begin{equation}\label{2.54}
\left\{ \begin{split}
\Delta h = 0 &  \;\;\mbox{in $U \backslash B_2(0,R_\nu + 2 \delta), \;\mbox{with} \; U = \mathring{B}_2(0,r)$}
\\
h = 1 &\;  \;\mbox{on $B_2(0,R_\nu + 2 \delta)$, and $h = 0$ in $\IR^d \backslash U$},
\end{split}\right.
\end{equation}
where we recall that $B_2(z,a)$ stands for the closed Euclidean ball in $\IR^d$ with center $z$ and radius $a$, and $\mathring{B}_2(z,a)$ for the corresponding open ball. As an aside, note that $h(z)$ can be represented as the probability that Brownian motion starting at $z$ enters $B_2(0,R_\nu + 2 \delta)$ before exiting $U$.

We then choose $0 < \eta < \delta$ and a non-negative smooth function $\phi^\eta$ with support in $B_2(0,\eta)$ such that $\int_{\IR^d} \,\phi^\eta(z) \, dz = 1$. We regularize $h$ by convolution with $\phi^\eta$ and set
\begin{equation}\label{2.55}
h^\eta = h * \phi^\eta .
\end{equation}
We then define for $N \ge 1$,
\begin{equation}\label{2.56}
f_N(x) = \Big(\sqrt{\mbox{\no $\frac{u_{**} + \ve}{u}$}} - 1 \Big) \;h^\eta \big(\mbox{\no $\frac{x}{N}$}\big) + 1 \;\; \mbox{for $x \in \IZ^d$},
\end{equation}
and introduce as in (2.7) of \cite{LiSzni14}
\begin{equation}\label{2.57}
\begin{array}{l}
\wt{\IP}_N = e^{F_{u,N}} \IP, \;\mbox{with}
\\
F_{u,N} (\omega) = \sum\limits_{i \ge 0} \;\int_\IR - \mbox{\no $\frac{\Delta_{\rm disc} f_N}{f_N}$} \;\big(X_s(w_{i,u})\big) \,ds,
\end{array}
\end{equation}
where $w_{i,u}$, $i \ge 0$, keeps track of all doubly infinite continuous-time $\IZ^d$-valued trajectories in the interlacement up to level $u$, $X_s(w_{i,u})$ stands for the position of $w_{i,u}$ at time $s$, and
\begin{equation}\label{2.58}
(\Delta_{\rm disc} g)(x) = \mbox{\no $\frac{1}{2d}$} \;\mbox{\normalsize $\sum\limits_{|e|_1 = 1}$} \;\big(g(x+ e) - g(x)\big), \; x \in \IZ^d,
\end{equation}
is the discrete Laplacian of $g$, for $g$: $\IZ^d \r \IR$.

As shown in Proposition 2.1 of \cite{LiSzni14}, $\wt{\IP}_N$ is a probability measure. We then consider $\Gamma_N$ as above (3.1) of \cite{LiSzni14}, the exterior boundary of the discrete blow-up of $B_2(0,R_\nu + \frac{\delta}{2})$, namely
\begin{equation}\label{2.59}
\Gamma_N = \partial \big\{x \in \IZ^d; \mbox{$x$ is within $|\cdot |_\infty$-distance $1$ \!of $B_2\big(0,\big(R_\nu +  \mbox{\normalsize $\frac{\delta}{2}$}\big)\,N\big)\!\big\}$}.
\end{equation}
By (4.10) of \cite{LiSzni14}, we know that
\begin{equation}\label{2.60}
\wt{\IP}_N [ \cA^u_N] \underset{N}{\longrightarrow} 1, \;\; \mbox{if} \;\; \cA^u_N = \mbox{\normalsize $\bigcap\limits_{x \in \Gamma_N}$} \big\{ x \stackrel{\cV^u}{\mbox{\Large $\longleftrightarrow$}} \hspace{-3.5ex} /\quad S_{N-1}\big\}
\end{equation}
(i.e.~$\cA^u_N$ is the event stating that no site of $\Gamma_N$ is linked by a path in $\cV^u$ to a site with $| \cdot |_\infty$-norm $N-1$).

We now choose an arbitrary sequence of non-negative integers $\wt{L}_0 (N)$ satisfying (\ref{0.3}). Then for large $N$, on $\cA^u_N$ one has
\begin{equation*}
\cC^u_N \subseteq \IZ^d \backslash B_2\big(0,\big(R_\nu + \mbox{\no $\frac{\delta}{2}$}\big) \;N\big),  \;\;\mbox{so that} \;\;\wt{\cW}^u_N \supseteq B_2 \big(0,\big(R_\nu + \mbox{\no $\frac{\delta}{4}$}\big) \;N\big) \cap \IZ^d.
\end{equation*}
We thus find that
\begin{equation}\label{2.61}
\mbox{for large $N$, on $\cA^u_N$, one has $| \,\wt{\cW}^u_N| \ge \nu N^d$}.
\end{equation}
By the change of probability method, cf.~(\ref{1.6}), we have
\begin{equation}\label{2.62}
\IP [\cA^u_N] \ge \wt{\IP}_N [\cA^u_N] \;\exp\big\{- \mbox{\no $\frac{1}{\wt{\IP}_N[\cA^u_N]}$} \big(H(\wt{\IP}_N \, | \, \IP) + \mbox{\no $\frac{1}{e}$}\big)\big\},
\end{equation}
and by (\ref{2.60}), (\ref{2.61}), we find that
\begin{equation}\label{2.63}
\liminf\limits_N \;  \mbox{\no $\frac{1}{N^{d-2}}$} \; \log \IP [ \,|\wt{\cW}^u_N| \ge \nu N^d] \ge - \limsup_N \;  \mbox{\no $\frac{1}{N^{d-2}}$}  \; H(\wt{\IP}_N \,| \, \IP).
\end{equation}
Then, by Propositions 2.3 and 2.4 of \cite{LiSzni14}, letting successively $\eta$ tend to $0$, $r$ to $\infty$, and $\delta$ tend to zero, we obtain
\begin{equation}\label{2.64}
\liminf\limits_N \;  \mbox{\no $\frac{1}{N^{d-2}}$} \; \log \IP [ \,|\wt{\cW}^u_N| \ge \nu N^d] \ge - \mbox{\no $\frac{1}{d}$}  \;\big(\sqrt{u_{**} + \ve} - \sqrt{u}\big)^2 \,{\rm cap}(B_\nu).
\end{equation}
Letting now $\ve$ tend to $0$ yields our claim (\ref{2.51}).
\end{proof}

\begin{rmk}\label{rem2.5} \rm  ~

1) Let us mention that the tilted interlacements provide a kind of slowly space-modulated random interlacements with space dependent parameter $u \,f_N(\frac{x}{N})^2$, see the Introduction of \cite{LiSzni14}. Actually, see below (1.49) of \cite{LiSzni14}, the law of $\cI^u$ under $\wt{\IP}_N$ coincides with the law of the interlacement at level $1$ if one endows each nearest neighbor edge $\{x,y\}$ of $\IZ^d$ with the conductance $\frac{u}{2d}\, f_N(\frac{x}{N}) \, f_N (\frac{y}{N}$). We will however not need this fact here.

2) Let us also point out that for large $N$, under the tilted interlacement measure $\wt{\IP}_N$, the vacant set $\cV^u$ inside $B_2(0,R_\nu N) \cap \IZ^d$ is in a strongly non-percolative regime (see Proposition 4.1 of \cite{LiSzni14}). As a result with overwhelming probability under $\wt{\IP}_N$ as $N$ goes to infinity, there are no macroscopic components in $\cV^u \cap B_2(0,R_\nu N)$. This feature is very different from what would happen in the case of Bernoulli percolation in the strongly percolative regime. A similar constraint like $|\wt{\cW}^u_N| \ge \nu N^d$ would qualitatively be produced by the creation of a blocking interface with a Wulff-shape like aspect (see for instance Theorem 2.12 of \cite{Cerf00}), but the percolative regime would be preserved inside the blocking interface.

3) If the equalities $\ov{u} = u_* = u_{**}$ hold, then choosing $\wt{L}_0(N)$ as in the Theorem \ref{theo2.1}, $0 < u < u_*$ and $\nu < \omega_d$, the upper and lower bounds (\ref{2.1}) and (\ref{2.51}) are matching so that (with $B_\nu$ as below (\ref{0.8}))
\begin{equation}\label{2.65}
\lim\limits_N \;  \mbox{\no $\frac{1}{N^{d-2}}$} \; \log \IP [ \,|\wt{\cW}^u_N| \ge \nu N^d] = - \mbox{\no $\frac{1}{d}$}  \;\big(\sqrt{u}_* - \sqrt{u})^2 \,{\rm cap}(B_\nu).
\end{equation}
In addition, by (\ref{2.2}) and (\ref{2.51}), conditionally on $|\wt{\cW}^u_N| \ge \nu N^d$, the $\IR^d$-filling $F^u_N$ of $\frac{1}{N} \; \wt{\cW}^u_N$, is close to a translate of $B_\nu$ in the sense that
\begin{equation}\label{2.66}
\lim\limits_N \;\IE[\delta(F^u_N, B_\nu) \,\big| \, |\wt{\cW}^u_N| \ge \nu N^d] = 0
\end{equation}
(with $\delta(\cdot,\cdot)$ as in (\ref{1.11})).

However, coming back to 2) above, it is unclear what happens for large $N$  under the conditional measure $\IP[\,\, \cdot \, \, \big| \, |\wt{\cW}^u_N| \ge \nu N^d]$ inside the ``nearly spherical hole'' $\wt{\cW}^u_N$ (left in $B(0,N)$ by $\wt{\cC}^u_N$). Are large connected components of $\cV^u$ present or not? The picture ought to be nearly critical, and possibly quite different from what takes place under the measure $\wt{\IP}_N$ used to derive the lower bound.

4) Whereas the lower bound (\ref{2.51}) holds for the choice $\wt{L}_0(N) = 0$, i.e.~when one replaces $\wt{\cW}^u_N$ by $\cW^u_N$, the upper bound breaks down in this case. Indeed, $\cW^u_N \supseteq \cI^u \cap B(0,N-1)$, and by the spatial ergodic theorem, one finds that
\begin{equation}\label{2.67}
\mbox{$\IP$-a.s., $\liminf\limits_N \;| \cW^u_N | \, / \,|B(0,N-1)| \ge \IP[0 \in \cI^u] = 1 - e^{-\frac{u}{g(0,0)}} > 0$}
\end{equation}
where $g(x,y)$ stands for the Green function of the simple random walk on $\IZ^d$. One can of course wonder how small $\wt{L}_0(N)$ can be chosen so that (\ref{2.1}) holds. As already mentioned below (\ref{2.5}), explicit rates of decay of the function $\rho(L)$ that appears below (\ref{2.23}), see also (5.6) of \cite{Szni17}, would lead to a more explicit choice of $\wt{L}_0(N)$ in Theorem \ref{theo2.1}. \hfill $\square$
\end{rmk}

\section{The simple random walk}

In this section we now turn to the case of simple random walk on $\IZ^d$, $d \ge 3$, and prove in Theorems \ref{theo3.1} and \ref{theo3.2} the large deviation upper bounds and the lower bound concerning $\wt{\cW}_N$ (see (\ref{0.19})) mentioned in (\ref{0.20}), (\ref{0.22}), (\ref{0.23}). In a heuristic fashion, the simple random walk case discussed in this section corresponds to letting $u$ tend to $0$ in the random interlacement set-up treated in the previous section.

We keep the same notation as introduced below (\ref{0.15}) for the simple random walk $(X_n)_{n \ge 0}$ and its canonical law $P_x$ when starting from $x$ in $\IZ^d$.

The asymptotic upper bounds in the next theorem will come as an application of Theorem \ref{theo2.1} in the previous section and a coupling argument between simple random walk and random interlacements, which can be found in Corollary 7.3 of \cite{Szni15}, and then letting $u$ tend to $0$. We recall that $\ov{u}$ is the critical level for random interlacements, and that $0 < u < \ov{u}$ corresponds to the strongly percolative regime of the vacant set $\cV^u$, see below (\ref{0.6}).

\begin{thm}\label{theo3.1} (upper bound)

There exists $\wt{L}_0(N)$ satisfying (\ref{0.3}) such that for any $\nu > 0$
\begin{equation}\label{3.1}
\limsup\limits_N \; \mbox{\no $\frac{1}{N^{d-2}}$} \;\log P_0[\, | \wt{\cW}_N | \ge \nu N^d] \le - \mbox{\no $\frac{1}{d}$} \; \ov{u} \;{\rm cap}(B_\nu),
\end{equation}
where $\wt{\cW}_N$ is defined in (\ref{0.19}) and $B_\nu$ as below (\ref{0.8}).

In addition, the $\IR^d$-filling $F_N$ of $\frac{1}{n} \;\wt{\cW}_N$, see (\ref{0.21}), is such that for any $\nu > 0$ and $\mu > 0$, one has with similar notation as in (\ref{1.11}):
\begin{equation}\label{3.2}
\begin{array}{l}
\limsup\limits_N \, \frac{1}{N^{d-2}} \,\log P_0[\, | \wt{\cW}_N | \ge \nu N^d, \,\delta(F_N, B_\nu) \ge \mu] 
\\
\le - \frac{1}{d}\; \ov{u} \;\big({\rm cap}(B_\nu) + c_2(\nu,\mu)\big).
\end{array}
\end{equation}
\end{thm}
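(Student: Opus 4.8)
The plan is to deduce \eqref{3.1} and \eqref{3.2} from Theorem~\ref{theo2.1} by a comparison of the simple random walk with random interlacements at a small positive level $u$, followed by letting $u\downarrow 0$. Since the left-hand sides of \eqref{3.1}--\eqref{3.2} do not depend on $u$, it suffices to establish, for each fixed $u\in(0,\ov u)$, the corresponding inequalities with $(\sqrt{\ov u}-\sqrt u)^2$ in place of $\ov u$ and a $u$-independent positive constant in the role of $c_2(\nu,\mu)$, and then to use that $(\sqrt{\ov u}-\sqrt u)^2\uparrow \ov u$ as $u\downarrow 0$.

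The main ingredient is the coupling of Corollary~7.3 of \cite{Szni15} between the canonical simple random walk started at $0$ and the random interlacement at level $u$. The guiding heuristic is that, inside a box of radius of order $N$, the range of the walk in $B(0,N)$ (of order $N^2$ sites) is far sparser than $\cI^u$ (of order $u\,N^d$ sites), so that $\cV^u$ gets squeezed inside $\cV$; quantitatively, the cited coupling furnishes an event $G_N$ whose complement has probability decaying faster than $\exp\{-CN^{d-2}\}$ for every $C>0$, and on which one obtains $\wt{\cW}_N\subseteq\wt{\cW}^u_N$, hence $F_N\subseteq F^u_N$, where $\wt{\cW}_N,\wt{\cW}^u_N$ and $F_N,F^u_N$ are as in \eqref{0.19}, \eqref{0.6}, \eqref{0.21}, \eqref{0.9}. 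Here one records the routine locality fact that a nearest-neighbour path from a site of $B(0,N)$ to $S_N$ can be truncated at its first visit to $S_N$ and thereby kept inside $B(0,N)$, and that $\wt L_0(N)=o(N)$, so that $\wt{\cW}_N,\wt{\cW}^u_N,F_N,F^u_N$ are determined by $\cV,\cV^u$ inside $B(0,N+\wt L_0+1)\subseteq B(0,2N)$.

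Granting this, \eqref{3.1} follows at once: on $G_N$ one has $|\wt{\cW}^u_N|\ge|\wt{\cW}_N|$, so $P_0[|\wt{\cW}_N|\ge\nu N^d]$ is bounded by $\IP[|\wt{\cW}^u_N|\ge\nu N^d]$ plus the negligible probability of $G_N^c$, and \eqref{2.1} followed by $u\downarrow 0$ gives the claim. For \eqref{3.2} one extra point must be addressed: since $F^u_N$ is larger than $F_N$, it need not be far from a translate of $B_\nu$ just because $F_N$ is, so the event $\{|\wt{\cW}_N|\ge\nu N^d,\ \delta(F_N,B_\nu)\ge\mu\}$ cannot be fed directly into \eqref{2.2}. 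I would proceed by a dichotomy on $G_N$, with $\tau=\tau(d,\mu)\in(0,\mu\,2^{-d-1})$ fixed: if $|\wt{\cW}^u_N|\ge(\nu+\tau)N^d$, apply \eqref{2.1} with $\nu+\tau$ in place of $\nu$, using ${\rm cap}(B_{\nu+\tau})>{\rm cap}(B_\nu)$; otherwise $|\wt{\cW}^u_N|-|\wt{\cW}_N|<\tau N^d$, whence $|F^u_N\setminus F_N|\le 2^d(|\wt{\cW}^u_N|-|\wt{\cW}_N|)N^{-d}<\mu/2$, and then, using $F_N\subseteq F^u_N$, the elementary bounds $|F_N|\ge|\wt{\cW}_N|/N^d\ge\nu$ and $|\wt{\cW}^u_N|\ge|\wt{\cW}_N|\ge\nu N^d$, and $\delta(F^u_N,B_\nu)\ge\delta(F_N,B_\nu)-|F^u_N\setminus F_N|\ge\mu/2$, the event $\{|\wt{\cW}^u_N|\ge\nu N^d,\ \delta(F^u_N,B_\nu)\ge\mu/2\}$ occurs, so \eqref{2.2} applies with $\mu/2$ in place of $\mu$. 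Taking the worse of the two exponential rates, adding the negligible contribution of $G_N^c$, and letting $u\downarrow 0$, one gets \eqref{3.2} with $c_2(\nu,\mu)=\min\{{\rm cap}(B_{\nu+\tau})-{\rm cap}(B_\nu),\,c_1(\nu,\mu/2)\}>0$, the positivity being Lemma~\ref{lem2.3}.

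The locality reduction and the volume bookkeeping for the fillings are routine, and the interlacement side is entirely contained in Theorem~\ref{theo2.1} and Lemma~\ref{lem2.3}. I expect the only real difficulty to be the correct invocation (and, if necessary, adaptation) of the coupling of Corollary~7.3 of \cite{Szni15} so that it delivers an inclusion such as $\wt{\cW}_N\subseteq\wt{\cW}^u_N$ on an event whose complement is negligible at exponential order $N^{d-2}$; the delicate feature there is that the walk's range is comparatively dense near its starting point and needs some care in the comparison.
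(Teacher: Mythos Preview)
Your overall strategy---couple the walk to random interlacements at small level $u$, invoke Theorem~\ref{theo2.1}, and let $u\downarrow 0$---is exactly the paper's. Two points deserve correction or simplification.

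First, the coupling from Corollary~7.3 of \cite{Szni15} is cleaner than you suggest: it couples $\cI$ under $P_0$ with $\cI^u$ under the \emph{conditional} law $\IP[\,\cdot\mid 0\in\cI^u]$, and delivers the inclusion $\cI\subseteq\cI^u$ \emph{almost surely}, not merely on a good event $G_N$. There is thus no bad event to control, and no locality argument is needed; the price is only that the interlacement marginal is conditional, which is harmless since $\IP[0\in\cI^u]>0$ does not affect the exponential rate. Your worry about the walk being ``dense near its starting point'' dissolves: conditioning on $\{0\in\cI^u\}$ is precisely what makes room for the walk inside $\cI^u$.

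Second, your dichotomy for \eqref{3.2} works, but the paper avoids it entirely. Since $F_N\subseteq F^u_N$ and $|F_N|\ge\nu$ on $\{|\wt\cW_N|\ge\nu N^d\}$, one has for any translate $B$ of $B_\nu$ that $|F_N\,\Delta\,B|\le |F^u_N\setminus F_N|+|F^u_N\,\Delta\,B|=(|F^u_N|-|F_N|)+|F^u_N\,\Delta\,B|$, and since $|F^u_N|-\nu\le\delta(F^u_N,B_\nu)$ (because $|F^u_N|-|B|\le|F^u_N\,\Delta\,B|$), one gets directly $\delta(F_N,B_\nu)\le 2\,\delta(F^u_N,B_\nu)$. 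Hence $\{|\wt\cW_N|\ge\nu N^d,\ \delta(F_N,B_\nu)\ge\mu\}\subseteq\{|\wt\cW^u_N|\ge\nu N^d,\ \delta(F^u_N,B_\nu)\ge\mu/2\}$, and \eqref{2.2} with $\mu/2$ followed by $u\downarrow 0$ gives \eqref{3.2} with $c_2(\nu,\mu)=c_1(\nu,\mu/2)$, with no case split.
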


\begin{proof}
We consider a sequence $\wt{L}_0(N)$ as constructed in Theorem \ref{theo2.1}. For any $u > 0$ (we will actually only consider $u < \ov{u}$), we can find a coupling $P$ of $\cI^u$ under $\IP[ \cdot \, | 0 \in \cI^u]$, and $\cI$ (the set of points in $\IZ^d$ visited by the simple random walk) under $P_0$, such that $\cI \subseteq \cI^{u}$, see the proof of Corollary 7.3 of \cite{Szni15}. Then, we have $P$-a.s., for $N \ge 1$, $\cC_N \supseteq \cC^u_N$ so that $\wt{\cC}^N \supseteq \wt{\cC}^u_N$, and hence 
\begin{equation}\label{3.3}
\mbox{$P$-a.s., for all $N \ge 1, \, \wt{\cW}^u_N \supseteq \wt{\cW}_N$}.
\end{equation}
It then follows that
\begin{equation}\label{3.4}
\begin{array}{l}
\limsup\limits_N   \frac{1}{N^{d-2}} \,\log P_0 [ \,| \wt{\cW}_N| \ge \nu N^d] =  
\\
\limsup\limits_N  \frac{1}{N^{d-2}} \,\log P [ \,| \wt{\cW}_N| \ge \nu N^d]   \le
\\[2ex]
 \limsup\limits_N \frac{1}{N^{d-2}} \,\log P [ \,| \wt{\cW}^u_N| \ge \nu N^d] =
\\
\limsup\limits_N  \frac{1}{N^{d-2}} \,\log \IP [ \,| \wt{\cW}_N^u| \ge \nu N^d\,| \, 0 \in \cI^u] \le
\\[2ex]
\limsup\limits_N  \frac{1}{N^{d-2}} \,\log \IP [ \,| \wt{\cW}^u_N| \ge \nu N^d] \stackrel{{\rm Theor.\,{\scriptsize \ref{2.1}}}}{\le} -  \frac{1}{d} \;(\sqrt{\ov{u}} - \sqrt{u})^2 \,{\rm cap}(B_\nu), 
\end{array}
\end{equation}
where in the inequality at the end of the second line we have used that $\IP[0 \in \cI^u] > 0$. Letting $u \r 0$, we now find (\ref{3.1}).

We now turn to the proof of (\ref{3.2}). We note that by (\ref{3.3}), one has
\begin{equation}\label{3.5}
\mbox{$P$-a.s., for all $N \ge 1$, $F^u_N \supseteq F_N$},
\end{equation}
so that for $N \ge 1$,
\begin{equation}\label{3.6}
\mbox{$P$-a.s., on $\{ | \wt{\cW}_N| \ge \nu N^d\}$, $\nu \le |F_N| \le |F^u_N|$}
\end{equation}
(with $|F_N|$ and $|F^u_N|$ the respective Lebesgue measures of $F_N$ and $F^u_N$).

By (\ref{3.5}), (\ref{3.6}) and the triangle inequality, we find that for $N \ge 1$,
\begin{equation}\label{3.7}
\begin{array}{l}
\mbox{$P$-a.s., on $\{ | \wt{\cW}_N| \ge \nu N^d\}$, $\delta(F_N,B_\nu) \stackrel{\mbox{\f (\ref{3.5})}}{\le}$}
\\
|F^u_N| - |F_N| + \delta(F^u_N, B_\nu) \stackrel{\mbox{\f (\ref{3.6})}}{\le} |F^u_N| - \nu + \delta (F^u_N, B_\nu) \le 2 \delta (F^u_N,B_\nu),
\end{array}
\end{equation}
where we used that $|B_\nu| = \nu$ in the last step. As in (\ref{3.4}), we now find that
\begin{equation}\label{3.8}
\begin{array}{l}
\limsup\limits_N  \mbox{\no $\frac{1}{N^{d-2}}$} \,\log P_0 [ \,| \wt{\cW}_N| \ge \nu N^d, \,\delta(F_N,B_\nu) \ge \mu] = 
\\
\limsup\limits_N  \mbox{\no $\frac{1}{N^{d-2}}$} \,\log P [ \,| \wt{\cW}_N| \ge \nu N^d, \,\delta(F_N,B_\nu) \ge \mu] \stackrel{\mbox{\f (\ref{3.3}}),(\mbox{\f \ref{3.7})}}{\le}
\\
\limsup\limits_N  \mbox{\no $\frac{1}{N^{d-2}}$} \,\log P \big[ \,| \wt{\cW}^u_N| \ge \nu N^d, \,\delta(F^u_N,B_\nu) \ge \mbox{\no $\frac{\mu}{2}$}\big] =
\\
\limsup\limits_N  \mbox{\no $\frac{1}{N^{d-2}}$} \,\log \IP \big[ \,| \wt{\cW}^u_N| \ge \nu N^d, \,\delta(F^u_N,B_\nu) \ge \mbox{\no $\frac{\mu}{2}$}\big] \stackrel{\mbox{\f (\ref{2.2})}}{=} 
\\
- \mbox{\no $\frac{1}{d}$} \,(\sqrt{\ov{u}} - \sqrt{u})^2 \big({\rm cap}(B_\nu) + c_1(\nu,  \mbox{\no $\frac{\mu}{2}$})\big).
\end{array}
\end{equation}
Letting $u$ tend to $0$, we find (\ref{3.2}) with $c_2(\nu,\mu) = c_1(\nu, \frac{\mu}{2})$.
\end{proof}

We now turn to the asymptotic lower bound. As in the case of Theorem \ref{2.4}, the restriction $\nu < \omega_d$ permits to avoid boundary effects (see above (\ref{2.51})). The proof of Theorem \ref{theo3.2} below relies on the change of probability method and the {\it tilted random walk} considered in \cite{Li17}. We recall the notation from (\ref{0.12}) and below (\ref{0.6}).
\begin{thm}\label{theo3.2} (lower bound)

For any $0 < \nu < \omega_d$ and $\wt{L}_0(N)$ satisfying (\ref{0.3}), one has
\begin{equation}\label{3.9}
\liminf\limits_N \; \mbox{\no $\frac{1}{N^{d-2}}$}\,\log P_0 \big[ \,| \wt{\cW}_N| \ge \nu N^d] \ge - \mbox{\no $\frac{1}{d}$} \;u_{**} \,{\rm cap}(B_\nu)
\end{equation}
with $B_\nu$ as below (\ref{0.8}). 
\end{thm}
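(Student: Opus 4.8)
Here is how I would attack Theorem~\ref{theo3.2}. The plan is to transpose the proof of Theorem~\ref{theo2.4} to the present setting, substituting for the tilted interlacements of \cite{LiSzni14} the \emph{tilted random walk} of \cite{Li17} (which heuristically corresponds to the $u \downarrow 0$ degeneration of the former construction). Fix $0 < \nu < \omega_d$ and an arbitrary sequence $\wt{L}_0(N)$ as in (\ref{0.3}); since no thickening is needed here, this freedom is harmless. Define $R_\nu \in (0,1)$ through $\nu = \omega_d R_\nu^d$, and choose $\delta \in (0,1)$ with $R_\nu + 2\delta < 1$, together with $\ve > 0$, $r > 1$ and $0 < \eta < \delta$. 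Exactly as in (\ref{2.54})--(\ref{2.55}), let $h$ be the solution of the equilibrium problem that is harmonic in $\mathring{B}_2(0,r) \backslash B_2(0,R_\nu + 2\delta)$, equal to $1$ on $B_2(0,R_\nu + 2\delta)$ and to $0$ outside $\mathring{B}_2(0,r)$, and let $h^\eta = h * \phi^\eta$ be its regularisation by a mollifier $\phi^\eta$ supported in $B_2(0,\eta)$. From the profile $h^\eta(\cdot/N)$ I would then build, following \cite{Li17}, the tilted walk measure $\wt{P}_N$ -- an $h$-transform of $P_0$ that pushes the trajectory out of the central ball -- which is a probability measure whose relative entropy with respect to $P_0$ is controlled there.

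Next I would invoke the counterpart in \cite{Li17} of Proposition~4.1 of \cite{LiSzni14}: under $\wt{P}_N$, with probability tending to $1$ as $N \to \infty$, the vacant set $\cV$ is in a strongly non-percolative regime in a neighbourhood of $B_2(0,R_\nu N)$. Consequently, writing $\Gamma_N$ for the exterior boundary of the discrete blow-up of $B_2(0,(R_\nu + \mbox{\no $\frac{\delta}{2}$})N)$ as in (\ref{2.59}), and $\cA_N$ for the event that no site of $\Gamma_N$ is joined by a path in $\cV$ to a site of $\IZ^d$ at $|\cdot|_\infty$-distance $N-1$ from the origin, one gets $\wt{P}_N[\cA_N] \to 1$. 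On $\cA_N$ the component $\cC_N$ of $S_N$ in $\cV \cup S_N$ is contained in $\IZ^d \backslash B_2(0,(R_\nu + \mbox{\no $\frac{\delta}{2}$})N)$; since $\wt{L}_0(N) = o(N)$, for large $N$ the thickening $\wt{\cC}_N$ still avoids $B_2(0,(R_\nu + \mbox{\no $\frac{\delta}{4}$})N)$, so that $\wt{\cW}_N \supseteq B_2(0,(R_\nu + \mbox{\no $\frac{\delta}{4}$})N) \cap \IZ^d$ and therefore $|\wt{\cW}_N| \ge \nu N^d$ for large $N$.

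The change of probability inequality (\ref{1.6}) then gives, for large $N$,
\[
P_0[\cA_N] \ge \wt{P}_N[\cA_N]\,\exp\Big\{- \mbox{\no $\frac{1}{\wt{P}_N[\cA_N]}$}\big(H(\wt{P}_N \,|\, P_0) + \mbox{\no $\frac{1}{e}$}\big)\Big\},
\]
and, combining with the inclusion $\cA_N \subseteq \{|\wt{\cW}_N| \ge \nu N^d\}$ (valid for large $N$) and with $\wt{P}_N[\cA_N] \to 1$,
\[
\liminf\limits_N \; \mbox{\no $\frac{1}{N^{d-2}}$}\,\log P_0[\,|\wt{\cW}_N| \ge \nu N^d] \ge -\limsup\limits_N \; \mbox{\no $\frac{1}{N^{d-2}}$}\,H(\wt{P}_N \,|\, P_0).
\]
Invoking the relative entropy asymptotics of \cite{Li17} and letting successively $\eta \downarrow 0$, $r \uparrow \infty$, $\delta \downarrow 0$ and finally $\ve \downarrow 0$ then yields $\limsup_N N^{-(d-2)} H(\wt{P}_N \,|\, P_0) \le \mbox{\no $\frac{1}{d}$}\, u_{**}\, {\rm cap}(B_\nu)$; here the hypothesis $\nu < \omega_d$ ensures, as explained above (\ref{2.51}), that $B_\nu \subset \mathring{B}(0,1)$ is the capacity-minimising shape of volume $\nu$ inside the unit box, so that no boundary effect inflates the constant. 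This is precisely (\ref{3.9}).

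I expect the single genuinely delicate point to be the disconnection estimate $\wt{P}_N[\cA_N] \to 1$. In contrast to the interlacement case, the tilted object here is the trace of one transient random walk started at the origin, that is, inside the would-be hole, so one must argue that the tilting forces this lone trajectory to accumulate enough local time in an annulus around $B_2(0,R_\nu N)$ to block every vacant crossing of it; this is exactly what the core estimates of \cite{Li17} provide. Everything else is a faithful transcription of the proof of Theorem~\ref{theo2.4}.
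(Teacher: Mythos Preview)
Your proposal is correct and follows the same route as the paper: the tilted random walk of \cite{Li17}, the disconnection event $\cA_N$ with $\wt{P}_N[\cA_N]\to 1$ (this is Proposition~5.8 of \cite{Li17}), the inclusion $\cA_N\subseteq\{|\wt{\cW}_N|\ge\nu N^d\}$ for large $N$, and the change of probability inequality (\ref{1.6}) combined with the entropy asymptotics of \cite{Li17}. Two small corrections to your description, neither affecting the argument's validity: first, the tilted walk of \cite{Li17} is not a pure $h$-transform of $P_0$ but an $h$-transform run up to a deterministic time $T_N=(u_{**}+\ve)\sum_y h_N^2(y)$, after which it reverts to simple random walk, and its effect is to \emph{confine} the trajectory to the support of $h_N$ so as to accumulate the blocking local time you correctly describe in your last paragraph---it does not ``push the trajectory out of the central ball''; second, $\wt{P}_N$ is a continuous-time object, so the entropy appearing in (\ref{1.6}) is $H(\wt{P}_N\,|\,\ov{P}_0)$ with $\ov{P}_0$ the unit-rate continuous-time walk, and one then uses that the range $\cI$ (hence $\wt{\cW}_N$) has the same law under $P_0$ and $\ov{P}_0$.
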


\begin{proof}
We consider $0 < \nu < \omega_d$, and $R_\nu \in (0,1)$ as in (\ref{2.52}). We further consider $\delta \in (0,1)$ such that $R_\nu + 2 \delta < 1$ (as in (\ref{2.53})), and for $N \ge 1$, define $\Gamma_N$ as in (\ref{2.59}). We then consider the measure $\wt{P}_N$ from (2.36) of \cite{Li17} that governs the so-called ``tilted random walk'' in that work. We refer to \cite{Li17} for the precise construction of $\wt{P}_N$, but in essence one considers a compactly supported non-negative approximation $\wt{h}$ of the equilibrium potential of $B_\nu$ (on $\IR^d$), and sets $h_N(x) = \wt{h}(\frac{x}{N})$. Under $\wt{P}_N$ (see Lemma 2.5 of \cite{Li17}) the corresponding walk starts at the origin, behaves as a Markov chain with generator $\wt{L}_N \,g(x)=\frac{1}{2d} \sum_{|y - x|_1 = 1} \frac{h_N(y)}{h_N(x)} \,(g(y) - g(x))$ up to a deterministic time $T_N=(u_{**} + \epsilon) \,\sum_{y \in \IZ^d} h^2_N(y) $, and then moves as a simple random walk on $\IZ^d$ with unit jump rate.  By Proposition 5.8 of \cite{Li17} we know that
\begin{equation}\label{3.10}
\wt{P}_N [ \cA_N] \underset{N}{\longrightarrow} 1, \;\; \mbox{if} \;\; \cA_N = \mbox{\normalsize $\bigcap\limits_{x \in \Gamma_N}$} \big\{ x \stackrel{\cV}{\mbox{\Large $\longleftrightarrow$}} \hspace{-3.5ex} /\quad S_{N-1}\big\}.
\end{equation}
(i.e. $\cA_N$ is the event stating that no site of $\Gamma_N$ is linked by a path in $\cV = \IZ^d \backslash \cI$ to a site with $| \cdot|_\infty$-norm $N-1$). For large $N$, on $\cA_N$ one has $\cC_N \subseteq B(0,N) \backslash (B(0,R_\nu + \frac{\delta}{2})\,N) \cap \IZ^d)$ so that
\begin{equation*}
\wt{\cW}_N \supseteq B\big(0, \big(R_\nu  + \mbox{\no $\frac{\delta}{4}$}\big)\,N\big) \cap \IZ^d.
\end{equation*}
It thus follows that
\begin{equation}\label{3.11}
\mbox{for large $N$, on $\cA_N$, $|\wt{\cW}_N| \ge \nu N^d$}.
\end{equation}
By the change of probability method, see (\ref{1.6}), it now follows that
\begin{equation}\label{3.12}
\liminf\limits_N \; \mbox{\no $\frac{1}{N^{d-2}}$}\,\log P_0 \big[ \,| \wt{\cW}_N| \ge \nu N^d] \ge - \limsup\limits_N \, \mbox{\no $\frac{1}{N^{d-2}}$} \;H(\wt{P}_N\,| \,\ov{P}_0)
\end{equation}
(where $\ov{P}_0$ governs the law of the continuous time simple random walk on $\IZ^d$ with unit jump rate and starting at the origin).

Proceeding as in (6.3) of \cite{Li17}, we then obtain by taking successive limits in the parameters entering smoothing, cut-off, and threshold, in the construction of the tilted walk (similar to (\ref{2.64})) that
\begin{equation}\label{3.13}
\liminf\limits_N \; \mbox{\no $\frac{1}{N^{d-2}}$}\,\log P_0 \big[ \,| \wt{\cW}_N| \ge \nu N^d] \ge - \mbox{\no $\frac{1}{d}$} \; u_{**}\, {\rm cap}(B_\nu).
\end{equation}
This proves (\ref{3.9}).
\end{proof}

\begin{rmk}\label{rem3.3} \rm  ~

1) As in Remark \ref{rem2.5} 2), let us point out that under $\wt{P}_N$ with overwhelming probability as $N$ tends to infinity, there are no macroscopic components in $\cV \cap B_2 (0, R_\nu N)$.

2) If the equalities $\ov{u} = u_* = u_{**}$ hold, then choosing $\wt{L}_0(N)$ as in Theorem \ref{theo3.1}, for $0 < \nu < \omega_d$ the upper and lower bounds in (\ref{3.1}), (\ref{3.9}) are matching so that with $B_\nu$ as below (\ref{0.8}), one has
\begin{equation}\label{3.14}
\lim\limits_N \; \mbox{\no $\frac{1}{N^{d-2}}$}\,\log P_0 \big[ \,| \wt{\cW}_N| \ge \nu N^d] = - \mbox{\no $\frac{1}{d}$} \; u_*\, {\rm cap}(B_\nu), \;\mbox{for $0 < \nu < \omega_d$}.
\end{equation}
In addition, by (\ref{3.2}), conditionally on $\{ \,| \wt{\cW}_N| \ge \nu N^d\}$, the $\IR^d$-filling $F_N$ of $\frac{1}{N}\wt{\cW}_N$ is close in $L^1$-distance to a translate of $B_\nu$:
\begin{equation}\label{3.15}
\lim\limits_N \; E_0[\delta (F_N,B_\nu) \,\big| \, |\wt{\cW}_N| \ge \nu N^d] = 0, \;\mbox{for $0 < \nu < \omega_d$}.
\end{equation}

However, what happens for large $N$  under $P_0[\,\, \cdot \, \, \big| \, |\wt{\cW}_N| \ge \nu N^d]$ inside the ``nearly spherical hole'' $\wt{\cW}_N$ (left in $B(0,N)$ by $\wt{\cC}_N$) is unclear. In particular are large connected components of $\cV$ present or not? The picture may be quite different from what takes place under $\wt{P}_N$. See 1) above, and below (\ref{2.66}) for a similar remark in the case of random interlacements.

3) In Theorem \ref{theo3.1} the proof shows that $\wt{L}_0(N)$ can be chosen as in Theorem \ref{theo2.1}. It remains an open issue to find out how small $\wt{L}_0(N)$ can be chosen so that (\ref{3.1}) and (\ref{3.2}) hold. \hfill $\square$
\end{rmk}

\section{The Gaussian free field}

We now turn to the case where the level set $E^{\ge \alpha} = \{\varphi \ge \alpha\}$ of the canonical Gaussian free field on $\IZ^d$, $d \ge 3$, in the strongly percolative regime $\alpha < \ov{h}$, replaces the vacant set $\cV^u$ random interlacements on $\IZ^d$, with $0 < u < \ov{u}$, in Section 2, or the vacant set $\cV$ of the simple random walk on $\IZ^d$, in Section 3. Our main results in this section pertain to large deviation bounds on the volume and shape of the complement $\wt{W}^{\ge \alpha}_N$ in $B(0,N)$ of the suitably thickened component of $S_N$ in $E^{\ge \alpha}$. They appear in Theorems \ref{theo4.1} and \ref{theo4.2} of this section. We keep the same notation as below (\ref{0.25}).

The next theorem is the counterpart to Theorem \ref{theo2.1} in the present context.

\begin{thm}\label{theo4.1} (upper bound)

There exists $\wt{L}_0(N)$ satisfying (\ref{0.3}) such that for any $\alpha < \ov{h}$ and $\nu > 0$
\begin{equation}\label{4.1}
\limsup\limits_N  \mbox{\no $\frac{1}{N^{d-2}}$} \,\log \IP^G [ \,| \wt{\cW}^{\,\ge \alpha}_N| \ge \nu N^d] \le - \mbox{\no $\frac{1}{2d}$} \;(\ov{h} - \alpha)^2 \,{\rm cap}(B_\nu)
\end{equation}
(with $B_\nu$ as below (\ref{0.8})).

In addition, for the $\IR^d$-filling $F^{\ge \alpha}_N$ of $\frac{1}{N} \;\wt{\cW}^{\, \ge \alpha}_N$, see (\ref{0.32}), one has for any $\nu > 0$ and $\mu > 0$ (with similar notation as in (\ref{1.11}))
\begin{equation}\label{4.2}
\begin{array}{l}
\limsup\limits_N  \mbox{\no $\frac{1}{N^{d-2}}$} \,\log \IP^G [ \,| \wt{\cW}^{\,\ge \alpha}_N| \ge \nu N^d, \,\delta(F^{\ge \alpha}_N, B_\nu) \ge \mu] 
\\
\le - \mbox{\no $\frac{1}{2d}$} \;(\ov{h} - \alpha)^2 \,\big({\rm cap}(B_\nu) + c_1(\nu,\mu)\big).
\end{array}
\end{equation}
\end{thm}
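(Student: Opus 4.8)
The plan is to mirror as closely as possible the argument given for Theorem \ref{theo2.1}, substituting the Gaussian free field decomposition $\varphi = h_{B_z} + \psi_{B_z}$ (harmonic average inside $U_z$ plus a local field vanishing outside $U_z$) for the splitting into ``undertow'' $N_u(D_z)$ and local good/bad structure used in the random interlacements case. Concretely, I would first fix rationals $\alpha < \bar\alpha < \beta < \gamma < \bar h$ and a rational $\tilde\ve \in (0,1)$, together with an integer $K \ge c(\bar\alpha,\beta,\gamma,\tilde\ve)$, and reuse the scales $L_0, \wh L_0$ and lattices $\IL_0, \wh\IL_0$ from \eqref{2.3}--\eqref{2.7}, choosing $\wt L_0(N)$ by the same diagonal procedure as in \eqref{2.5} so that it dominates every relevant $\wh L_0$. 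The role of the event $N_u(D_z) < \beta\,{\rm cap}_{\IZ^d}(D_z)$ is now played by a lower bound $h_{B_z} < \beta$-type condition on the harmonic average, while the notion of a good$(\bar\alpha,\beta,\gamma)$-box is replaced by the corresponding good-box notion for the local field $\psi_{B_z}$ from \cite{Szni15} (as sharpened in \cite{Nitz}, \cite{ChiaNitz} via \cite{NitzSzni}); the reference for the relevant coarse graining with the GFF is Section 5 of \cite{NitzSzni} (or the disconnection paper \cite{Szni15}), and I would cite those for the analogue of the connectivity statement \eqref{2.13} of that reference, for the bad-event estimate \eqref{2.25}, and for the counting bound $|\cK_N| = \exp\{o(N^{d-2})\}$.

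The structural core of the argument then transfers verbatim. I would define $\cU^1$, the density function $\wh\sigma$, the set $\wh\cS_N$, the compact set $\Delta_N$, the thinned set $\wt\cS_N$ and the columns of bad boxes exactly as in \eqref{2.8}--\eqref{2.31}, and establish the analogues of \eqref{2.14}, \eqref{2.15}, and of the insulation property Lemma \ref{lem2.2}: on the hole event $\cH^{\ge\alpha}_N = \{|\wt\cW^{\ge\alpha}_N| \ge \nu N^d\}$, for large $N$ the $\tfrac{\wt L_0}{2N}$-neighborhood of $\tfrac1N\wt\cW^{\ge\alpha}_N$ lies in a bounded component of $\IR^d\setminus\Delta_N$. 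The compact sets $A_\kappa$ are defined as in \eqref{2.33}, and \eqref{2.34}, \eqref{2.35}, \eqref{2.37} carry over word for word, giving, on $\cH^{\ge\alpha}_{N,\kappa}$, that $F^{\ge\alpha}_N \subseteq \mathring A_\kappa$ with $|\mathring A_\kappa| \ge \nu$, and on the refined event index set $\cK^\mu_N$ that $2(|\mathring A_\kappa| - \nu) + |\mathring A_\kappa|\,\lambda_{\mathring A_\kappa} \ge \mu$. At this point Lemma \ref{lem2.3} applies unchanged and yields ${\rm cap}(A_\kappa) \ge {\rm cap}(B_\nu) + c_1(\nu,\mu)$ on $\cK^\mu_N$, and ${\rm cap}(A_\kappa) \ge {\rm cap}(B_\nu)$ in general via Polya--Szeg\"o.

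The one genuinely model-specific step is the exponential bound replacing \eqref{2.46}: here one uses the Gaussian change-of-measure / entropy estimate for the GFF in the spirit of \cite{Szni15}, \cite{NitzSzni}, which produces a prefactor of the form $\tfrac{1}{2}(\gamma - \alpha)^2 / (\text{something} \to 1)$ in front of $\tfrac1d\,{\rm cap}_{\IZ^d}(C)$ rather than the interlacements quantity $\Gamma$; the extra factor $\tfrac12$ and the square $(\bar h - \alpha)^2$ (after sending $\tilde\ve \to 0$ and $\bar\alpha,\beta,\gamma \to \bar h$ along rationals) is exactly the source of the $\tfrac{1}{2d}(\bar h - \alpha)^2$ in \eqref{4.1}, \eqref{4.2}. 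Then, as in \eqref{2.47}--\eqref{2.48}, one passes from ${\rm cap}_{\IZ^d}(C)$ to $\tfrac1d\,{\rm cap}(\Sigma)$ via Proposition A.1 of \cite{NitzSzni}, applies the porous-interface capacity lower bound \eqref{1.4} with $A = A_\kappa$, $\ve = 10\,\wh L_0/N$, and $2^{-\ell_*} \le \wt L_0/(10N)$, and concludes. The main obstacle is thus not conceptual but bookkeeping: one must verify that the GFF coarse graining of \cite{Szni15}/\cite{NitzSzni} delivers precisely the four ingredients (connectivity inside good boxes, the super-exponential smallness of $\cB_N$, the $\exp\{o(N^{d-2})\}$ cardinality of $\cK_N$, and the Gaussian exponential bound with the right constant) in a form compatible with the ``hole event'' rather than the disconnection event for which they were originally stated — but since \eqref{2.14}--\eqref{2.19} already isolate exactly the geometric input needed, this is a routine, if careful, transcription.
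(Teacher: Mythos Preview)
Your proposal is correct and follows essentially the same route as the paper. The one point you underspecify is the Gaussian exponential bound (Corollary~4.4 of \cite{Szni15}): with parameters $\delta<\gamma$ rational in $(\alpha,\ov h)$ and $a=\delta-\alpha$, it reads $\log \IP^G[\cH_{N,\kappa}] \lesssim -\tfrac{1}{2}\bigl(a - \tfrac{c}{K}\sqrt{|\cC|/{\rm cap}_{\IZ^d}(C)}\bigr)_+^2 \,{\rm cap}_{\IZ^d}(C)/\alpha(K)$, so the correction sits \emph{inside} the square rather than as a multiplicative factor, and showing it vanishes needs a \emph{lower} bound on ${\rm cap}_{\IZ^d}(C)$ --- the paper therefore invokes the capacity comparison (Proposition~A.1 of \cite{NitzSzni}) and the porous-interface bound \eqref{1.4} twice, once to kill this term and once at the end.
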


\begin{proof}
Once again, the proof heavily relies on a coarse graining procedure to construct a porous interface as in Section 4 of \cite{NitzSzni}, and on the capacity lower bound (1.4) proved in Corollary 3.4 of \cite{NitzSzni}, see also Section 3 of \cite{Nitz}. The general line is similar to the proof of Theorem \ref{theo2.1}, and we will highlight where different arguments are involved.

We first specify $\wt{L}_0(N)$. To this end, for each rationals $\delta < \gamma < \ov{h}$ and integer $K \ge 100$, we choose a sequence $\gamma_N$ (depending on $\delta < \gamma$ and $K$), which tends to $0$ as $N \r \infty$, and  satisfies the assumptions corresponding to (4.18) of \cite{NitzSzni}, but the function $\rho(L)$ above (4.15) of \cite{NitzSzni} is now replaced by the function in (5.18) of \cite{Szni15} (which depends on $\gamma, \delta$ and $K$), see also Section 3 of \cite{Nitz}. We can then define $L_0$ and $\wh{L}_0$ as in (\ref{2.3}) and choose $\wt{L}_0(N)$ to be any non-negative integer valued sequence such that
\begin{equation}\label{4.3}
\left\{ 
\begin{array}{rll}
{\rm i)} &\!\!\! \wt{L}_0(N) = o(N),
\\[1ex]
{\rm ii)} &\!\!\! \wt{L}_0(N) \, / \, \wh{L}_0(N) \underset{N}{\longrightarrow} \infty& \!\!\!\mbox{for any choice of rationals $\delta < \gamma < \ov{h}$}
\\[-1ex]
&&\!\!\! \mbox{and integer $K \ge 100$}
\end{array}\right.
\end{equation}
(such a sequence can for instance be constructed by a diagonal procedure, see also the end of Remark \ref{rem4.3} 3)). We then introduce the lattices $\IL_0$ and $\wh{\IL}_0$ as in (\ref{2.6}) and keep the same notation $B_z,D_z$, and $U_z$ for $z \in \IZ^d$, as in (\ref{2.7}).

We now choose $\delta < \gamma$ rationals in $(\alpha, \ov{h})$ and $K \ge 100$ (and the corresponding sequence $\gamma_N$ mentioned above). There is a notion of $B_z$ being a $\psi$-good box, see (5.7), (5.8) of \cite{Szni15}. The details are not important here. It corresponds to a decomposition for each box $B = B_z$ of the Gaussian free field $\varphi$ into $\varphi = h_B + \psi_B$, where $h_B$ is an harmonic average of $\varphi$ inside $U_z$ (and equals $\varphi$ outside $U_z$) and $\psi_B$ is a local field. As mentioned above (\ref{2.8}), $h_B$ aims at tracking an ``undertow'' in the field, and $\psi_B$ contains a spatially faster decorrelating information. In essence for a $\psi$-good box $B_z$ at level $\delta < \gamma$, $B_z \cap \{\psi_{B_z} \ge \gamma\}$ contains a component of $| \cdot |_\infty$-diameter at least $L_0/10$, and for any two neighboring boxes $B_z$ and $B_{z'}$, any two components of $B_z \cap \{\psi_{B_z} \ge \gamma\}$ and $B_{z'} \cap \{\psi_{B_{z'}} \ge \gamma\}$ with diameter at least $L_0/10$ are connected in $D_z \cap \{\psi_{B_z} \ge \delta\}$.

There is also a notion of a box $B_z$ being $h$-good at level $a > 0$, namely that $\inf_{D_z} h_{B_z} > -a$ (for us $a = \delta - \alpha > 0$ will be the natural choice, and eventually we will let $\delta$ and $\gamma$ tend to $\ov{h}$). With these notions in mind, we define similarly as in (\ref{2.8})
\begin{equation}\label{4.4}
\begin{array}{ll}
\cU^1 =  &\!\!\! \mbox{the union of all $L_0$-boxes that are either contained in}
\\
&\!\!\! \mbox{$B(0,2N)^c$ or linked to an $L_0$-box contained in $B(0,2N)^c$} 
\\
&\!\!\! \mbox{by a path of $L_0$-boxes $B_{z_i}, 0 \le i \le n$, which are all, except} 
\\
&\!\!\! \mbox{may be for the last one, $\psi$-good at level $\delta, \gamma$ and $h$-good}
\\
&\!\!\!  \mbox{at level $a = \delta - \alpha$}. 
\end{array}
\end{equation}
Parallel to (\ref{2.9}) and (\ref{2.10}), we introduce the ``hole events''
\begin{equation}\label{4.5}
\begin{split}
\cH^{\ge \alpha}_N &= \{| \wt{\cW}^{\,\ge \alpha}_N| \ge N^d\} \supseteq \cH^{\ge\alpha,\mu}_N  
\\
& = \{| \wt{\cW}^{\,\ge \alpha}_N| \ge \nu N^d, \, \delta(F^{\ge \alpha}_N ,B_\nu) \ge \mu\}.
\end{split}
\end{equation}
We apply the same coarse graining procedure with the help of the function $\wh{\sigma}(x) = |\cU^1 \cap B(x,\wh{L}_0)| \, / \, |B(x,\wh{L}_0)|$, $x \in \IZ^d$. We introduce a bad event $\cB_N$ as in (\ref{2.23}), with our now current choice of the function $\rho(\cdot)$, and $\psi$-bad at levels $\delta,\gamma$ in place of bad$(\alpha,\beta,\gamma)$. The bad event $\cB_N$ satisfies a super-exponential bound as in (\ref{2.25}), see also Proposition 5.4 of \cite{Szni15}, or (3.17) of \cite{Nitz}. Similar to (\ref{2.26}), we then introduce the effective events
\begin{equation}\label{4.6}
\wt{\cH}^{\,\ge \alpha}_N = \cH^{\ge \alpha}_N \backslash \cB_N \supseteq  \wt{\cH}^{\,\ge \alpha,\mu}_N = \cH^{\ge \alpha,\mu}_N  \backslash \cB_N.
\end{equation}
As in (\ref{2.16}), (\ref{2.22}), (\ref{2.28}), for large $N$ we construct on $\cH^{\ge \alpha}_N$ a random variable $\kappa_N = (\wh{\cS}_N, \wt{\cS}_N, (\wt{\pi}_x,\wt{\cC}_x)_{x \in \wt{\cS}_N})$ with range $\cK_N$, a set such that $| \cK_N| = \exp\{o(N^{d-2})\}$, so that corresponding to (\ref{2.27}), see also below (3.16) of \cite{Nitz}, we have with $\ov{K} = 2K+3$
\begin{equation}\label{4.7}
\begin{array}{l}
\mbox{for large $N$, on $\cH^{\ge \alpha}_N$, for each $x \in \wt{\cS}_N$, $\wt{\cC}_x$ is a collection of}
\\
\mbox{$L_0$-boxes, which intersect $B(x,\wh{L}_0)$, have $\wt{\pi}_x$-projection at}
\\
\mbox{mutual distance at least $\ov{K} L_0$, with cardinality $[(\frac{c'}{K} \;\frac{\wh{L}_0}{L_0})^{d-1}]$,}
\\
\mbox{and such that for each $z \in \wt{C}_x$, $B_z$ is $\psi$-good at level $\gamma,\delta$ and}
\\
\mbox{$h$-bad at level $a = \delta - \alpha$}.
\end{array}
\end{equation}
One then attaches to each $\kappa \in \cK_N$ a ``segmentation'' represented by $S$ (or $U_0$), and a porous interface corresponding to $\Sigma \subseteq \IR^d$, the solid filling of $\frac{1}{N} \,C$ (with $C \subseteq \IZ^d)$, as in (\ref{2.31}).

Then, with Corollary 4.4 of \cite{Szni15}, we have a function $\alpha(K)>1$ with $\lim_{K \r \infty} \alpha(K) = 1$, so that setting $\cH_{N,\kappa} = \cH^{\ge \alpha}_{N} \cap \{\kappa_N = \kappa\}$, for $\kappa \in \cK_N$, we have
\begin{equation}\label{4.8}
\limsup\limits_N \;\sup\limits_{\kappa \in \cK_N} \Big\{\log \IP^G[\cH_{N,\kappa}] + \mbox{\no $\frac{1}{2}$} \;\Big(a - \mbox{\no $\frac{c}{K}$} \;\sqrt{\mbox{\no $\frac{|\cC |}{{\rm cap}_{\IZ^d}(C)}$}}\Big)_+^2 \mbox{\no $\frac{{\rm cap}_{\IZ^d}(C)}{\alpha(K)}$} \Big\} \le 0,
\end{equation}
where for $\kappa \in \cK_N$, $C$ is as in (\ref{2.31}) and we used the notation
\begin{equation}\label{4.9}
\cC = \bigcup\limits_{x \in \wt{S}} \; \wt{\cC}_x,
\end{equation}
so that for large $N$, by (\ref{2.3})
\begin{equation}\label{4.10}
\begin{array}{l}
|\cC | \le  \mbox{\normalsize $\sum\limits_{x \in \wt{S}} |\wt{\cC}_x| \le c \,\big(\frac{N}{\wh{L}_0}\big)^d \big( \frac{c'}{K} \; \frac{\wh{L}_0}{L_0}\big)^{d-1}$} 
\\
\le \mbox{\normalsize $\frac{c}{K^{d-1}} \;\gamma^{-\frac{1}{2}}_N \big(\frac{N}{L_0}\big)^{d-1} \le \frac{c}{K^{d-1}} \;\; \gamma^{\frac{1}{2}}_N \;\;\frac{N^{d-2}}{\log N}$}.
\end{array}
\end{equation}
As we will now explain, the term following $a$ inside the square in (\ref{4.8}) becomes negligible as we successively let $N$ tend to infinity and then $K$ tend to infinity. We first note that by Proposition A.1 of \cite{NitzSzni} one has (with $C$ and $\Sigma$ as in (\ref{2.31}))
\begin{equation}\label{4.11}
\liminf\limits_{K \r \infty} \; \;\liminf\limits_{N \r \infty} \;\; \inf\limits_{\kappa \in \cK_N} \; \mbox{\no $\frac{1}{N^{d-2}}$} \; {\rm cap}_{\IZ^d} (C) / {\rm cap}(\Sigma) \ge \mbox{\no $\frac{1}{d}$}.
\end{equation}
Moreover, with $A_\kappa$ as in (\ref{2.33}), by the capacity lower bound (\ref{1.4}), with $\ve = 10 \, \wh{L}_0 / N,\ell_*$ the smallest non-negative integer such that $2^{-\ell_*} \le \wt{L}_0 / (10N)$, and $A = A_\kappa$, we find that
\begin{equation}\label{4.12}
\liminf\limits_N \;\;\inf\limits_{\kappa \in \cK_N} \;{\rm cap}(\Sigma) \, / \, {\rm cap}(A_\kappa) \ge 1,
\end{equation}
and as in (\ref{2.34}),
\begin{equation}\label{4.13}
\begin{array}{l}
\mbox{for large $N$, for any $\kappa \in \cK_N$, on $\cH_{N,\kappa}$},
\\
\mbox{one has $F_N^{\ge \alpha} \subseteq \mathring{A}_\kappa$ and $\nu \le |F_N^{\ge \alpha}| \le |\mathring{A}_\kappa|$}.
\end{array}
\end{equation}
By the Polya-Szeg\"o Inequality, see below (\ref{1.8}), we then have ${\rm cap}(\mathring{A}_\kappa) \ge {\rm cap}(B_\nu)$, and coming back to (\ref{4.12}), we find that
\begin{equation}\label{4.14}
\underset{N}{\underline{\lim}} \;\;\inf\limits_{\kappa \in \cK_N} \;{\rm cap}(\Sigma) \, / \, {\rm cap}(B_\nu) \ge \underset{N}{\underline{\lim}} \; \inf\limits_{\kappa \in \cK_N} \;{\rm cap}(A_\kappa) \, / \, {\rm cap}(B_\nu) \ge 1.
\end{equation}
As a result, combining (\ref{4.10}), (\ref{4.11}), (\ref{4.14}), we find that
\begin{equation}\label{4.15}
\limsup\limits_K \;  \limsup\limits_N \;  \sup\limits_{\kappa \in \cK_N} \;  |\cC| \, / \, {\rm cap}_{\IZ^d}(C) = 0.
\end{equation}
If we now proceed as in (\ref{2.46}), (\ref{2.47}), taking into account the super-exponential bound satisfied by the bad event $\cB_N$, we obtain
\begin{equation}\label{4.16}
\begin{array}{l}
\limsup\limits_N \;  \mbox{\no $\frac{1}{N^{d-2}}$} \;\log \IP^G [\cH^{\ge \alpha}_N] \;\le
\\[1ex]
- \underset{K \r \infty}{\underline{\lim}} \;\; \underset{N \r \infty}{\underline{\lim}} \; \inf\limits_{\kappa \in \cK_N} \; \mbox{\no $\frac{1}{N^{d-2}}$} \;\log \IP^G [\cH_{N,\kappa}] \underset{\mbox{\f (\ref{4.12}), (\ref{4.15})}}{\stackrel{\mbox{\f (\ref{4.8}),(\ref{4.11})}}{\le}}
\\[3ex]
- \underset{K \r \infty}{\underline{\lim}} \; \underset{N \r \infty}{\underline{\lim}}\; \inf\limits_{\kappa \in \cK_N}  \; \mbox{\no $\frac{a^2}{\alpha(K)}$}\; \mbox{\no $\frac{1}{d}$} \;{\rm cap}(A_\kappa)  \underset{\alpha(K) \r 1}{\stackrel{\mbox{\f (\ref{4.14})}}{\le}}  -\mbox{\no $\frac{a^2}{2d}$} \;{\rm cap}(B_\nu).
\end{array}
\end{equation}
Recall that $a = \delta - \alpha$. Letting $\delta$ converge (along rationals) to $\ov{h}$, we find (\ref{4.1}). As for (\ref{4.2}), we introduce as in (\ref{2.36}) for large $N$
\begin{equation}\label{4.17}
\cK^\mu_N = \{\kappa \in \cK_N; \cH_{N,\kappa} \cap \wt{\cH}^{\,\ge \alpha,\mu}_N \not= \phi\}
\end{equation}
so that for large $N$, as in (\ref{2.37}),
\begin{equation}\label{4.18}
\mbox{for any $\kappa \in \cK^\mu_N, | \mathring{A}_\kappa| \ge \nu \; \mbox{and} \; 2 (| \mathring{A}_\kappa| - \nu) + |\mathring{A}_\kappa| \,\lambda_{\mathring{A}_\kappa} \ge \mu $}.
\end{equation}
We then have, proceeding as in (\ref{4.16}),
\begin{equation}\label{4.19}
\begin{array}{l}
\limsup\limits_N \; \mbox{\no $\frac{1}{N^{d-2}}$} \; \log \IP^G [\cH^{\,\ge \alpha, \mu}_N]  \le 
\\
- \underset{K \r \infty}{\underline{\lim}} \;\; \underset{N \r \infty}{\underline{\lim}} \;\inf\limits_{\kappa \in \cK^\mu_N} \;\; \mbox{\no $\frac{a^2}{\alpha(K)}$} \; \mbox{\no $\frac{1}{d}$} \;{\rm cap}(A_\kappa)  
\\[2ex]
 \underset{\alpha(K) \r 1}{\stackrel{\mbox{\f (\ref{4.18}),(\ref{2.39})}}{\le}}  - \;\mbox{\no $\frac{a^2}{2d}$} \; \big({\rm cap}(B_\nu) + c_1(\nu,\mu)\big).
\end{array}
\end{equation}
Letting $\delta$ converge along rationals to $\ov{h}$ now yields (\ref{4.2}). This concludes the proof of Theorem \ref{theo4.1}.
\end{proof}

We now come to the asymptotic lower bound. As explained above Theorem \ref{theo2.4}, the condition $\nu < \omega_d$ ensures that $B_\nu$ ist contained in $\mathring{B}(0,1)$ and permits us to avoid boundary effects. We refer to above (\ref{0.26}) and to (\ref{0.29}) for notation.

\begin{thm}\label{theo4.2} (lower bound)

For any $\alpha < h_{**}$, $0 < \nu < \omega_d$, and $\wt{L}_0(N)$ satisfying (\ref{0.3}), we have
\begin{equation}\label{4.20}
\liminf\limits_N \; \mbox{\no $\frac{1}{N^{d-2}}$} \; \log \IP^G [\,| \wt{\cW}^{\, \ge \alpha}_N| \ge \nu N^d] \ge - \mbox{\no $\frac{1}{2d}$} \;(h_{**} - \alpha)^2 \,{\rm cap}(B_\nu).
\end{equation}
\end{thm}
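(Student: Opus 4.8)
The plan is to follow the change of probability strategy already used for Theorems~\ref{theo2.4} and~\ref{theo3.2}, now feeding in the tilted Gaussian free field of \cite{Szni15}. Fix $\alpha < h_{**}$ and $0 < \nu < \omega_d$, let $R_\nu \in (0,1)$ be as in (\ref{2.52}) (so $B_\nu = B_2(0,R_\nu)$), and choose $\delta \in (0,1)$ with $R_\nu + 2\delta < 1$ as in (\ref{2.53})---possible since $\nu < \omega_d$ forces $R_\nu < 1$---together with $\ve > 0$, $r > 1$, and $\eta \in (0,\delta)$. Let $h$ be the potential of (\ref{2.54}) attached to $B_2(0,R_\nu + 2\delta)$ inside $U = \mathring{B}_2(0,r)$, and let $h^\eta = h * \phi^\eta$ be its mollification as in (\ref{2.55}). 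Since $\alpha < h_{**}$, the amount $h_{**} + \ve - \alpha$ is positive, and I would tilt by shifting the mean of $\varphi$ \emph{downward} by this amount over the blow-up of the ball: following the tilted-field construction of \cite{Szni15}, let $\wt{\IP}^G_N$ be the law under which $\varphi$ keeps the same covariance as under $\IP^G$ but has mean $g_N(x) = -(h_{**} + \ve - \alpha)\, h^\eta(x/N)$, $x \in \IZ^d$ (equivalently, the law of $\varphi + g_N$ under $\IP^G$). Because $h^\eta \equiv 1$ on $B_2(0,R_\nu + 2\delta)$ (as $\eta < \delta$) and vanishes outside $U$, under $\wt{\IP}^G_N$ the restriction of $E^{\ge \alpha}$ to a discrete blow-up of $B_2(0,R_\nu + 2\delta)$ has the law of $\{\varphi \ge h_{**} + \ve\}$ under $\IP^G$, hence lies in the strongly non-percolative regime, while far from the origin the field is untouched. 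The restriction $R_\nu + 2\delta < 1$ keeps the perturbed region inside $B(0,N)$ and rules out boundary effects, exactly as above Theorem~\ref{theo2.4}; any $\wt{L}_0(N)$ as in (\ref{0.3}), in particular $\wt{L}_0(N) = 0$, is admissible here.

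Next I would introduce $\Gamma_N$ as in (\ref{2.59}), the exterior boundary of the discrete blow-up of $B_2\big(0,(R_\nu + \tfrac{\delta}{2})N\big)$, and invoke the non-percolation estimate of \cite{Szni15} (the counterpart of (\ref{2.60}) and (\ref{3.10})): $\wt{\IP}^G_N[\cA^{\ge \alpha}_N] \to 1$, where $\cA^{\ge \alpha}_N$ is the event that no site of $\Gamma_N$ is joined by a path in $E^{\ge \alpha}$ to a site of $S_{N-1}$. On $\cA^{\ge \alpha}_N$, arguing as below (\ref{2.60}) (a nearest-neighbor $E^{\ge \alpha}$-path from $S_N$ reaching across $\Gamma_N$ would link a site of $\Gamma_N$ to $S_{N-1}$), the component $\cC^{\ge \alpha}_N$ cannot enter $B_2\big(0,(R_\nu + \tfrac{\delta}{2})N\big)$; since $\wt{L}_0(N) = o(N)$, for large $N$ the thickened component $\wt{\cC}^{\ge \alpha}_N$ still misses $B_2\big(0,(R_\nu + \tfrac{\delta}{4})N\big)$, so $\wt{\cW}^{\ge \alpha}_N \supseteq B_2\big(0,(R_\nu + \tfrac{\delta}{4})N\big) \cap \IZ^d$ and hence $|\wt{\cW}^{\ge \alpha}_N| \ge \nu N^d$ for large $N$. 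Feeding this, together with $\wt{\IP}^G_N[\cA^{\ge \alpha}_N] \to 1$, into the entropy inequality (\ref{1.6}) applied to $A = \cA^{\ge \alpha}_N$ then gives
\[
\liminf_N \tfrac{1}{N^{d-2}} \log \IP^G[\,|\wt{\cW}^{\ge \alpha}_N| \ge \nu N^d] \ \ge\ - \limsup_N \tfrac{1}{N^{d-2}}\, H(\wt{\IP}^G_N \,|\, \IP^G).
\]

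It remains to evaluate the relative entropy, which for a mean-shift of the free field by $g_N$ equals $\tfrac{1}{2}\,\cE(g_N, g_N)$ with $\cE$ the Dirichlet form of $\varphi$ (this identity, and the estimates needed for the limits below, are supplied by \cite{Szni15}). Since $g_N(\cdot) = -(h_{**} + \ve - \alpha)\, h^\eta(\cdot/N)$ with $h^\eta$ smooth and compactly supported, a Riemann-sum computation together with the identity expressing the Dirichlet energy of the equilibrium potential through the capacity (in the normalization of \cite{PortSton78}) yields that, letting successively $N \to \infty$, then $\eta \to 0$, $r \to \infty$, $\delta \to 0$ as in the proof of Theorem~\ref{theo2.4}, $\tfrac{1}{N^{d-2}}\, H(\wt{\IP}^G_N \,|\, \IP^G)$ converges to $\tfrac{1}{2d}(h_{**} + \ve - \alpha)^2\, {\rm cap}(B_\nu)$. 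A final passage $\ve \to 0$ produces (\ref{4.20}).

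The one genuinely non-routine ingredient is the estimate $\wt{\IP}^G_N[\cA^{\ge \alpha}_N] \to 1$: this is exactly the heart of the tilted-field analysis of \cite{Szni15}, asserting that under the tilt $E^{\ge \alpha}$ is overwhelmingly likely to be disconnected along a sphere surrounding the ball. Once that input, the relative-entropy formula, and the convergence of the Dirichlet energies toward ${\rm cap}(B_\nu)$ are borrowed from \cite{Szni15}, the remaining steps are a direct transcription of the proofs of Theorems~\ref{theo2.4} and~\ref{theo3.2}.
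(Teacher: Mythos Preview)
Your proposal is correct and follows essentially the same route as the paper: tilt the field by $g_N(x)=-(h_{**}+\ve-\alpha)\,h^\eta(x/N)$ via Cameron--Martin, show $\wt{\IP}^G_N[\cA^{\ge\alpha}_N]\to1$, feed this into (\ref{1.6}), and compute $H(\wt{\IP}^G_N\,|\,\IP^G)=\tfrac{1}{2}\,\cE(g_N,g_N)$ before letting the parameters go to their limits. Two small remarks: $h^\eta\equiv1$ only on $B_2(0,R_\nu+2\delta-\eta)$, not on $B_2(0,R_\nu+2\delta)$ (still fine since $\eta<\delta$ and $\Gamma_N$ sits at radius $R_\nu+\tfrac{\delta}{2}$); and the estimate $\wt{\IP}^G_N[\cA^{\ge\alpha}_N]\to1$ is actually simpler than you suggest---the paper obtains it directly by the Cameron--Martin shift (turning $\{\varphi\ge\alpha\}$ into $\{\varphi\ge h_{**}+\ve\}$ near $\Gamma_N$), translation invariance, the stretched-exponential decay of \cite{PopoRath15}, and a union bound over $|\Gamma_N|=O(N^{d-1})$ points, rather than invoking a deeper ``tilted-field analysis'' from \cite{Szni15}.
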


\begin{proof}
We keep the notation from (\ref{2.52}), (\ref{2.53}), so that $\nu = \omega_d \,R^d_\nu$, and we have $0 < \delta < 1$ and $r > 1$ satisfying
\begin{equation}\label{4.21}
R_\nu + 2 \delta < 1 < r,
\end{equation}
as well as $0 < \eta < \delta$ and the regularized function $h^\eta = h * \phi^\eta$, as in (\ref{2.54}), (\ref{2.55}). We then define for $\ve > 0$ and $N \ge 1$ (recall that $\alpha < h_{**}$)
\begin{equation}\label{4.22}
f_N(x) = - h^\eta \big(\mbox{\no $\frac{x}{N}$}\big) \; (h_{**} - \alpha + \ve), \;\mbox{for $x \in \IZ^d$},
\end{equation}
and
\begin{equation}\label{4.23}
\wt{\IP}^G_N = \exp\Big\{\cE(f_N,\varphi) - \mbox{\no $\frac{1}{2}$} \;\cE(f_N,f_N)\Big\} \,\IP^G,
\end{equation}
where $\cE(\cdot,\cdot)$ stands for the Dirichlet form
\begin{equation}\label{4.24}
\cE(f,g) = \mbox{\no $\frac{1}{2}$} \; \dis\sum\limits_{|x - y|_1 = 1} \; \mbox{\no $\frac{1}{2d}$} \; \big(f(y) - f(x)\big) \,\big(g(y) - g(x)\big),
\end{equation}
for $f,g$ functions on $\IZ^d$ such that the above series is absolutely convergent. By the Cameron-Martin formula (see also (2.3), (2.4) of \cite{Szni15}), we know that $\wt{\IP}^G_N$ is a probability measure and that
\begin{equation}\label{4.25}
\mbox{$\varphi$ under $\wt{\IP}^G_N$ has the same law as $\varphi + f_N$ under $\IP^G$}.
\end{equation}
Then, observe that
\begin{equation}\label{4.26}
f_N(x) = - (h_{**} - \alpha + \ve) \; \mbox{for $x$ in $B_2(0,(R_\nu + \delta)\,N) \cap \IZ^d$},
\end{equation}
and defining $\Gamma_N$ as in (\ref{2.59}) to be the exterior boundary of the discrete blow-up of $B_2(0,R_\nu + \frac{\delta}{2})$, we see that for large $N$ and any $x \in \Gamma_N$,  we have (with $\partial_{\rm int}$ denoting the inner boundary):
\begin{equation}\label{4.27}
\begin{array}{l}
\wt{\IP}^G_N \big[ x \stackrel{\varphi \ge \alpha}{\longleftrightarrow} \partial_{\rm int} \,B\big(x, \mbox{\no $\frac{\delta}{4d}$} \,N\big)\big] \stackrel{\mbox{\f (\ref{4.25})}}{=} 
\\
\IP^G \big[ x \stackrel{\varphi \ge \alpha - f_N}{\longleftrightarrow} \partial_{\rm int} \,B\big(x, \mbox{\no $\frac{\delta}{4d}$} \,N\big)\big] \stackrel{\mbox{\f (\ref{4.26})}}{=}
\\[1ex]
\IP^G \big[ x \stackrel{\varphi \ge h_{**} + \ve}{\longleftrightarrow} \partial_{\rm int} \,B\big(x, \mbox{\no $\frac{\delta}{4d}$} \,N\big)\big] \stackrel{{\rm trans.\,inv.}}{=} 
\\
\IP^G \big[ 0 \stackrel{\varphi \ge h_{**} + \ve}{\longleftrightarrow} \partial_{\rm int} \,B\big(0, \mbox{\no $\frac{\delta}{4d}$} \,N\big)\big] .
\end{array}
\end{equation}
This last quantity has a stretched exponential decay in $N$ (actually an exponential decay when $d \ge 4$, with a logarithmic correction when $d=3$, see Theorem 2.1 of \cite{PopoRath15}). It thus follows that
\begin{equation}\label{4.28}
\wt{\IP}^G_N [\cA^{\ge \alpha}_N] \underset{N}{\longrightarrow} 1, \; \mbox{if} \; \cA_N^{\ge \alpha} = \mbox{\no $\bigcap\limits_{x \in \Gamma_N}$} \big\{ x \stackrel{\varphi \ge \alpha}{\mbox{\Large $\longleftrightarrow$}} \hspace{-3.5ex} /\quad S_N\big\}.
\end{equation}
By the change of probability method, see (\ref{1.6}), and the fact that for large $N$ one has $\cA^{\ge \alpha}_N \subseteq \{\,| \wt{\cW}^{\ge \alpha}_N| \ge \nu N^d\}$ (as in (\ref{2.61})), one finds that
\begin{equation}\label{4.29}
\begin{split}
\liminf\limits_N \; \mbox{\no $\frac{1}{N^{d-2}}$} \;\log \IP^G [\,| \wt{\cW}^{\ge \alpha}_N| \ge \nu N^d] & \ge - \limsup\limits_N \; \mbox{\no $\frac{1}{N^{d-2}}$} \;H(\wt{\IP}^G_N \,| \, \IP^G)
\\
& = - \limsup\limits_N \; \mbox{\no $\frac{1}{N^{d-2}}$} \; \; \mbox{\no $\frac{1}{2}$} \;\cE(f_N,f_N)
\end{split}
\end{equation}
(see (2.7) of \cite{Szni15} for the last equality).

By a similar calculation as below (2.10) of \cite{Szni15}, we find that
\begin{equation}\label{4.30}
\begin{array}{l}
\liminf\limits_N \; \frac{1}{N^{d-2}} \;\log \IP^G [\,| \wt{\cW}^{\ge \alpha}_N| \ge \nu N^d] 
\\[1.5ex]
\ge  - \frac{1}{2d} \;(h_{**} - \alpha + \ve)^2 \,{\rm cap} \big(B_2(0,R_\nu + \delta)\big),
\end{array}
\end{equation}
and letting $\delta$ and $\ve$ tend to zero, we obtain (\ref{4.20}).
\end{proof}

Similarly to Remark \ref{rem2.5}, we now have

\begin{rmk}\label{rem4.3} \rm  ~

1) For large $N$ under $\wt{\IP}^G_N$ the super level-set $\{\varphi \ge \alpha\}$ inside $B_2(0,R_\nu N) \cap \IZ^d$ is in a strongly non-percolative regime as shown by a similar calculation as in (\ref{4.27}). In particular, with overwhelming 
$\wt{\IP}^G_N$-probability as $N \r \infty$, there are no macroscopic components in $\{\varphi \ge \alpha\} \cap B_2(0,R_\nu N)$. Once again this is qualitatively different from what would happen in the case of Bernoulli percolation, see Remark \ref{rem2.5} 2).

2) If the equalities $\ov{h} = h_* = h_{**}$ hold, then choosing $\wt{L}_0(N)$ as in Theorem \ref{theo4.1} the upper and lower bounds in (\ref{4.1}) and (\ref{4.20}) are matching when $\alpha < h_*$ and $0 < \nu < \omega_d$, so that (with $B_\nu$ as below (\ref{0.8}))
\begin{equation}\label{4.31}
\lim\limits_N \; \mbox{\no $\frac{1}{N^{d-2}}$} \;\log \IP^G  [\,| \wt{\cW}^{\ge \alpha}_N| \ge \nu N^d] = - \mbox{\no $\frac{1}{2d}$} \;(h_{*} - \alpha)^2 \,{\rm cap} (B_\nu).
\end{equation}
In addition by (\ref{4.2}) and (\ref{4.20}), conditionally on $| \wt{\cW}^{\ge \alpha}_N| \ge \nu N^d$, the $\IR^d$-filling $F^{\ge \alpha}_N$ of $\frac{1}{N} \; \wt{\cW}^{\ge \alpha}_N$ is close to a translate of $B_\nu$ in the sense that
\begin{equation}\label{4.32}
\lim\limits_N \;\IE^G [\delta(F^{\ge \alpha}_N, B_\nu) \, \big| \, | \wt{\cW}^{\ge \alpha}_N| \ge \nu N^d]  = 0
\end{equation}
(see (\ref{1.11}) for notation).

Again, what happens for large $N$  under $\IP^G[\,\, \cdot \, \, \big| \, |\wt{\cW}^u_N| \ge \nu N^d]$ inside the ``nearly spherical hole'' $\wt{\cW}^{\ge \alpha}_N$ (left in $B(0,N)$ by $\wt{\cC}^{\ge \alpha}_N$) is unclear, and possibly quite different from what takes place under the measure $\wt{\IP}^G_N$ used for the lower bound. See 1) above, and below (\ref{2.66}) and (\ref{3.15}) for similar remarks in the case of random interlacements and of the simple random walk.

3) Let us point out that the upper bound (\ref{4.1}) does not hold for the choice $\wt{L}_0(N) = 0$ (because $\IP^G$-a.s., $\underline{\lim}_N$ $| \wt{\cW}^{\ge \alpha}_N|  \,/ \,|B(0,N)| \ge \IP^G [\varphi_0 < \alpha] > 0$, by the application of the ergodic theorem and the inclusion $\{\varphi < \alpha\} \cap B(0,N-1) \subseteq B(0,N) \backslash \cC_N^{\ge N} = \cW_N^{\ge \alpha}$). One can naturally wonder how small $\wt{L}_0(N)$ can be chosen so that (\ref{4.1}) holds. Incidentally, explicit rates of decay on the function $\rho(L)$ from (5.18) of \cite{Szni15} would lead to a more explicit choice of $\wt{L}_0(N)$ in Theorem \ref{theo4.1}.  

\hfill $\square$
\end{rmk}

\end{document}